\newtheorem{theorem}{Theorem}
\newtheorem{corollary}[theorem]{Corollary}
\newtheorem{lemma}[theorem]{Lemma}
\newtheorem{proposition}[theorem]{Proposition}
\begin{document}
\title{Semilattice Structures of Spreading Models}

\begin{abstract}
Given a Banach space $X$, denote by $SP_{w}(X)$ the set of equivalence classes
of spreading models of $X$ generated by normalized weakly null sequences in
$X$. It is known that $SP_{w}(X)$ is a semilattice, i.e., it is a partially
ordered set in which every pair of elements has a least upper bound. We show
that every countable semilattice that does not contain an infinite increasing
sequence is order isomorphic to $SP_{w}(X)$ for some separable Banach space
$X$.

\end{abstract}
\author[D. H. Leung]{Denny H. Leung}
\address{Department of Mathematics, National University of Singapore, 2 Science Drive
2, Singapore 117543}
\email{matlhh@nus.edu.sg}
\author[W.-K. Tang]{Wee-Kee Tang}
\address{Mathematics and Mathematics Education, National Institute of Education \\
Nanyang Technological University, 1 Nanyang Walk, Singapore 637616}
\email{weekee.tang@nie.edu.sg}
\thanks{Research of the first author was partially supported AcRF project no.\ R-146-000-086-112}
\keywords{Spreading Models, Lorentz Sequence Spaces}
\subjclass[2000]{46B20, 46B15}
\maketitle

Given a normalized basic sequence $( y_{i}) $ in a Banach space and
$\varepsilon_{n}\searrow0$, using Ramsey's Theorem, one can find a subsequence
$( x_{i}) $ and a normalized basic sequence $( \tilde{x}_{i}) $ such that for
all $n\in\mathbb{N}$ and $( a_{i}) _{i=1}^{n}\subseteq[ -1,1] ,$
\[
\vert\Vert\sum a_{i}x_{k_{i}}\Vert-\Vert\sum a_{i}\tilde{x}_{i}\Vert
\vert<\varepsilon_{n}%
\]
for all $n\leq k_{1}<\dots<k_{n}.$ The sequence $( \tilde{x}_{i}) $ is called
a spreading model of $( x_{i}) .$ It is well-known that if $( x_{i}) $ is in
addition weakly null, then $( \tilde{x}_{i}) $ is 1-spreading and suppression
1-unconditional. See \cite{BS, GB} for more about spreading models. A
spreading model $( \tilde{x}_{i}) $ is said to ($C$-) \emph{dominate} another
spreading model $( \tilde{y}_{i}) $ if there is a $C<\infty$ such that for all
$( a_{i}) \subseteq\mathbb{R}$,%
\[
\Vert\sum a_{i}\tilde{y}_{i}\Vert\leq C\Vert\sum a_{i}\tilde{x}_{i}\Vert.
\]
The spreading models $( \tilde{x}_{i}) $ and $( \tilde{y}_{i}) $ are said to
be \emph{equivalent} if they dominate each other. Let $[ ( \tilde{x}_{i}) ] $
denote the class of all spreading models which are equivalent to $( \tilde
{x}_{i}) .$ Let $SP_{w}( X) $ denote the set of all $[ ( \tilde{x}_{i}) ] $
generated by normalized weakly null sequences in $X.$ If $[ ( \tilde{x}_{i}) ]
$, $[ ( \tilde{y}_{i}) ] \in SP_{w}( X) ,$ we write $[ ( \tilde{x}_{i}) ]
\leq[ ( \tilde{y}_{i}) ] $ if $( \tilde{y}_{i}) $ dominates $( \tilde{x}_{i})
.$ $( SP_{w}( X) ,\leq) $ is a partially ordered set. The paper \cite{AOST}
initiated the study of the order structures of $SP_{w}( X) .$ It was
established that every countable subset of $(SP_{w}(X),\leq)$ admits an upper
bound (\cite[Proposition 3.2]{AOST}). Moreover, from the proof of this result,
it follows that every pair of elements in $(SP_{w}(X),\leq)$ has a least upper
bound. In other words, $(SP_{w}(X),\leq)$ is a \emph{semilattice}. In
\cite{S}, it was shown that if $SP_{w}( X) $ is countable, then it cannot
admit a strictly increasing infinite sequence $( \tilde{x}_{i}^{1}) <(
\tilde{x}_{i}^{2}) <\cdots$. In \cite{DOS}, two methods of construction,
utilizing Lorentz sequence spaces and Orlicz sequence spaces respectively,
were used to produce Banach spaces $X$ so that $SP_{w}(X)$ has certain
prescribed order structures. In the present paper, building on the techniques
employed in \cite[\S 2]{DOS}, we show that every countable semilattice that
has no infinite increasing sequence is order isomorphic to $SP_{w}(X)$ for
some Banach space $X$. This gives an affirmative answer to Problem 1.15 in
\cite{DOS}. (See, however, the remark at the end of the paper.)

\section{A Representation Theorem for Semilattices}

Any collection of subsets of a set $V$ that is closed under the taking of
finite unions is a semilattice under the order of set inclusion. In this
section, we show that any countable semilattice that does not admit an
infinite increasing sequence may be represented in such a way using a
countable set $V$. The result may be of independent interest.

\begin{theorem}
\label{T1}Let $L$ be a countable semilattice with no infinite increasing
sequences. Then there exist a countable set $V$ and an injective map
$T:L\rightarrow2^{V}\smallsetminus\left\{  \emptyset\right\}  $ that preserves
the semilattice structure of $L,$ i.e., $T\left(  x\vee y\right)  =T\left(
x\right)  \cup T\left(  y\right)  $ for all $x,y\in L.$
\end{theorem}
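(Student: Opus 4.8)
The plan is to take for $V$ essentially the set $L$ itself, and to send an element $x\in L$ to the collection of those $z\in L$ whose principal ideal $\{w\in L:w\le z\}$ fails to contain $x$. The only thing one needs to observe is the defining property of the join: for all $x,y,z\in L$,
\[
x\vee y\le z\ \Longleftrightarrow\ x\le z\ \text{and}\ y\le z ,
\]
simply because $x\vee y$ is by definition the least upper bound of $\{x,y\}$. Writing $S_x=\{z\in L:x\not\le z\}$ and negating the right-hand side, this says precisely that $S_{x\vee y}=S_x\cup S_y$ for all $x,y\in L$.

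With this in hand I would fix a symbol $\ast\notin L$, set $V=L\cup\{\ast\}$ (a countable set, since $L$ is countable), and define $T\colon L\to 2^{V}$ by $T(x)=\{\ast\}\cup S_x$. Since $\ast\in T(x)$, each $T(x)$ is non-empty, so $T$ does map into $2^{V}\smallsetminus\{\emptyset\}$; and
\[
T(x\vee y)=\{\ast\}\cup S_{x\vee y}=\{\ast\}\cup S_x\cup S_y=T(x)\cup T(y) ,
\]
so $T$ preserves the semilattice operation, and hence its image $\{T(x):x\in L\}$ is a family of subsets of $V$ closed under finite unions, as in the discussion preceding the theorem. It then remains only to check that $T$ is injective: if $T(x)=T(y)$ then $S_x=S_y$ (intersect with $L$ to delete $\ast$), so the complements $\{z\in L:x\le z\}$ and $\{z\in L:y\le z\}$ coincide; since $x$ lies in the first it lies in the second, giving $y\le x$, and symmetrically $x\le y$, whence $x=y$.

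The point worth flagging is that there is really no hard step here, and the argument never uses the hypothesis that $L$ has no infinite increasing sequence: all one needs is that the principal ideals of $L$ — that is, the down-sets closed under joins — already separate any two incomparable elements. The hypothesis is presumably retained because it is what the Banach-space construction of the next section requires (by \cite{S} it cannot be dispensed with there). The more tempting representation, with $V$ the set of join-irreducible elements of $L$ and $T(x)$ the set of join-irreducibles below $x$ — on which the absence of infinite increasing chains would genuinely be used — does not work as stated, since this map need not be a homomorphism: already on the four-element semilattice $\{a,b,c,t\}$ with $a,b,c$ pairwise incomparable, each below $t$, and $a\vee b=a\vee c=b\vee c=t$, it sends $a\vee b=t$ to $\{a,b,c\}$ while $T(a)\cup T(b)=\{a,b\}$. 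So I would not pursue that route.
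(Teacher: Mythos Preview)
Your proof is correct, and considerably simpler than the paper's. The paper constructs $T$ by a transfinite induction: it uses the hypothesis of no infinite increasing sequence to derive an ordinal rank on $L$, enumerates $L$ as $(e_\beta)_{\beta<\beta_0}$ compatibly with this rank, and then builds maps $T_\beta$ on initial segments $U_\beta=\{e_{\beta'}:\beta'<\beta\}$ by a rather delicate recursion, taking $V$ to be an ordinal interval. Verifying injectivity and the join-preservation property then requires three lemmas and a proposition, each proved by transfinite induction.

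Your argument, by contrast, simply sends $x$ to (a pointed version of) the complement of its principal filter $\{z:x\le z\}$, and the whole proof reduces to the tautology $x\vee y\le z\iff x\le z\text{ and }y\le z$. You are right that the chain condition plays no role: the representation theorem holds for arbitrary (countable) join-semilattices, and the hypothesis is only there because of its relevance to the spreading-model application later in the paper. Your closing remark about the join-irreducible map failing on the three-atom example is also well taken and shows why a more ``canonical'' Birkhoff-style representation is not available here without further work. In short, the paper's inductive construction is valid but unnecessarily heavy for the statement as given; your approach buys the same conclusion with essentially no machinery.
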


Suppose that $L$ is a semilattice that satisfies the hypothesis of Theorem
\ref{T1}. Note that every nonempty subset of $L$ has at least one maximal
element; for otherwise, it will admit an infinite increasing sequence. Set
$L_{0}=L$. If $L_{\alpha}$ is defined for some countable ordinal $\alpha$ and
$L_{\alpha}\neq\emptyset$, let $L_{\alpha+1}=L_{\alpha}\smallsetminus\left\{
\text{maximal elements in }L_{\alpha}\right\}  .$ If $\alpha$ is a countable
limit ordinal such that $L_{\alpha^{\prime}} \neq\emptyset$ for all
$\alpha^{\prime}< \alpha$, let $L_{\alpha}= \cap_{\alpha^{\prime}<\alpha
}L_{\alpha^{\prime}}.$ Since $\left(  L_{\alpha}\right)  $ is a strictly
decreasing transfinite sequence of subsets of the countable set $L$,
$L_{\alpha}= \emptyset$ for some countable ordinal $\alpha$. Let $\alpha_{0}$
be the smallest ordinal such that $L_{\alpha_{0}} = \emptyset$. Enumerate $L$
as a transfinite sequence $\left(  e_{\beta}\right)  _{\beta<\beta_{0}}$ so
that if $e_{\beta_{1}}\in L_{\alpha_{1}}\smallsetminus L_{\alpha_{1}+1}$ and
$e_{\beta_{2}}\in L_{\alpha_{2}}\smallsetminus L_{\alpha_{2}+1}$ for some
$\alpha_{1}<\alpha_{2} < \alpha_{0}$, then $\beta_{1}<\beta_{2}$. If $1
\leq\beta\leq\beta_{0},$ let $U_{\beta}=\left\{  e_{\beta^{\prime}}%
:\beta^{\prime}<\beta\right\}  .$ Note that $L=U_{\beta_{0}}.$

\begin{lemma}
\label{L1}

\begin{enumerate}
\item[(a)] $e_{\beta}$ is a minimal element in $U_{\beta+1}.$

\item[(b)] If $e_{\beta} = e_{\beta_{1}}\vee e_{\beta_{2}}$ (least upper bound
taken in $L$), then $\beta\leq\min\left\{  \beta_{1},\beta_{2}\right\}  .$

\item[(c)] If $e_{\beta_{1}}, e_{\beta_{2}} \in U_{\beta}$, then $e_{\beta
_{1}}\vee e_{\beta_{2}}$ belongs to $U_{\beta}$.

\end{enumerate}
\end{lemma}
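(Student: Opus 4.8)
The plan is to introduce a rank function on $L$ induced by the stratification $(L_{\alpha})$, to record that the enumeration $(e_{\beta})$ is compatible with it, and then to deduce (b) and (c) from (a).

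For $e\in L$ let $r(e)$ be the unique ordinal $\alpha$ with $e\in L_{\alpha}\smallsetminus L_{\alpha+1}$. This is well defined: $(L_{\alpha})$ is a decreasing transfinite sequence that is continuous at limit stages (by construction $L_{\lambda}=\bigcap_{\alpha<\lambda}L_{\alpha}$), starts at $L$ and reaches $\emptyset$ at $\alpha_{0}$, so for each $e$ the set $\{\alpha:e\in L_{\alpha}\}$ is a nonempty, bounded initial segment of the ordinals that cannot terminate at a limit stage, hence equals $[0,r(e)]$ for some $r(e)<\alpha_{0}$. By the definition of $L_{r(e)+1}$, the element $e$ is a maximal element of $L_{r(e)}$. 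The key consequence, which I will call \emph{rank reversal}, is: if $e'<e$ in $L$ then $r(e')>r(e)$. Indeed, if $r(e')\le r(e)$ then $L_{r(e)}\subseteq L_{r(e')}$, so $e\in L_{r(e')}$ would be an element strictly above $e'$, contradicting the maximality of $e'$ in $L_{r(e')}$. The second ingredient is that the enumeration is rank-monotone: by the defining property of $(e_{\beta})$ (which applies since every rank is $<\alpha_{0}$), $r(e_{\beta'})<r(e_{\beta})$ implies $\beta'<\beta$; contrapositively, $\beta'\le\beta$ implies $r(e_{\beta'})\le r(e_{\beta})$.

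Part (a) then follows at once. Suppose $e_{\beta'}\in U_{\beta+1}$, i.e. $\beta'\le\beta$, and $e_{\beta'}\le e_{\beta}$. If $\beta'<\beta$ then $e_{\beta'}<e_{\beta}$ (the $e_{\gamma}$ are distinct), so rank reversal gives $r(e_{\beta'})>r(e_{\beta})$, while rank-monotonicity gives $r(e_{\beta'})\le r(e_{\beta})$ — impossible. Hence $\beta'=\beta$ and $e_{\beta'}=e_{\beta}$, so $e_{\beta}$ is minimal in $U_{\beta+1}$. Part (b): from $e_{\beta}=e_{\beta_{1}}\vee e_{\beta_{2}}$ we have $e_{\beta_{1}}\le e_{\beta}$; if $\beta_{1}<\beta$ then $e_{\beta_{1}}\in U_{\beta+1}$, and minimality of $e_{\beta}$ in $U_{\beta+1}$ (part (a)) would force $e_{\beta_{1}}=e_{\beta}$, i.e. $\beta_{1}=\beta$, a contradiction; so $\beta_{1}\ge\beta$, and by symmetry $\beta_{2}\ge\beta$, giving $\beta\le\min\{\beta_{1},\beta_{2}\}$. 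Part (c): if $e_{\beta_{1}},e_{\beta_{2}}\in U_{\beta}$ and $e_{\beta_{3}}=e_{\beta_{1}}\vee e_{\beta_{2}}$, then (b) yields $\beta_{3}\le\min\{\beta_{1},\beta_{2}\}<\beta$, so $e_{\beta_{3}}\in U_{\beta}$.

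The only genuine content is isolating the rank-reversal property, and the one point that needs care is that $r(\cdot)$ is legitimately defined at limit stages, i.e. that $\{\alpha:e\in L_{\alpha}\}$ cannot stop at a limit ordinal — which is exactly where the continuity $L_{\lambda}=\bigcap_{\alpha<\lambda}L_{\alpha}$ enters. Everything past (a) is a two-line deduction, so I do not anticipate a serious obstacle.
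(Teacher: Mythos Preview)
Your proof is correct and follows essentially the same route as the paper. The paper argues part (a) directly by contradiction, implicitly using the same rank $r(e)$ (written as ``$e_{\beta}\in L_{\alpha}\smallsetminus L_{\alpha+1}$'') together with what you call rank-monotonicity and rank reversal; parts (b) and (c) are then deduced from (a) exactly as you do. Your only difference is that you name and isolate the rank function and its two properties up front, which makes the argument marginally more transparent, but the logical content is identical.
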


\begin{proof}
(a) Suppose on the contrary that $e_{\beta}$ is not a minimal element in
$U_{\beta+1}$. Then there exists $e_{\beta^{\prime}}\in U_{\beta+1}$ with
$e_{\beta^{\prime}}<e_{\beta}.$ It follows from the definition of $U_{\beta
+1}$ that $\beta^{\prime}< \beta.$ If $e_{\beta}\in L_{\alpha}\smallsetminus
L_{\alpha+1}$ and $e_{\beta^{\prime}}\in L_{\alpha^{\prime}}\smallsetminus
L_{\alpha^{\prime}+1},$ then $\alpha^{\prime}\leq\alpha$ and hence $L_{\alpha
}\subseteq L_{\alpha^{\prime}}.$ Since $e_{\beta}, e_{\beta^{\prime}} \in
L_{\alpha^{\prime}}$ and $e_{\beta^{\prime}} < e_{\beta}$, $e_{\beta^{\prime}%
}$ is not maximal in $L_{\alpha^{\prime}}$. Thus $e_{\beta^{\prime}} \in
L_{\alpha^{\prime}+ 1}$, a contradiction.

(b) Suppose that $\beta_{1}<\beta$. Then $e_{\beta_{1}}\in U_{\beta+1}$ and
$e_{\beta_{1}} < e_{\beta}$, contrary to the minimality of $e_{\beta}$ in
$U_{\beta+1}.$ Similarly, $\beta_{2}\geq\beta.$

(c) Follows immediately from (b).
\end{proof}

If $1\leq\beta<\omega_{1},$ write $\beta=\gamma+n,$ where $\gamma$ is a limit
ordinal, $n<\omega,$ and let $V_{\beta}$ denote the ordinal interval
$[0,\gamma+2n).$ We define a family of maps $T_{\beta}:U_{\beta}%
\rightarrow2^{V_{\beta}}\smallsetminus\left\{  \emptyset\right\}  ,$
$1\leq\beta\leq\beta_{0}$, inductively so that $T=T_{\beta_{0}}$ is the map
sought for in Theorem \ref{T1}. Let $T_{1}:U_{1}=\left\{  e_{0}\right\}
\rightarrow2^{V_{1}}\smallsetminus\left\{  \emptyset\right\}  $ be defined by
$T_{1}\left(  e_{0}\right)  =\left\{  0,1\right\}  .$ If $T_{\beta}$ has been
defined, $1\leq\beta<\beta_{0}$, let
\[
T_{\beta+1}\left(  x\right)  =\left\{
\begin{array}
[c]{ccc}%
T_{\beta}\left(  x\right)  \cup\left\{  \gamma+2n,\gamma+2n+1\right\}  &
\text{if} & x\in U_{\beta+1}\setminus\{e_{\beta}\},\\
\bigcap\limits_{e_{\beta}<z\in U_{\beta}}T_{\beta}\left(  z\right)
\cup\left\{  \gamma+2n+1\right\}  & \text{if} & x=e_{\beta}.
\end{array}
\right.
\]
When $\beta\leq\beta_{0}$ is a limit ordinal and$\ e_{\beta^{\prime}}\in
U_{\beta}$, let $T_{\beta}( e_{\beta^{\prime}}) =\cup_{\beta^{\prime}%
<\xi<\beta}T_{\xi}( e_{\beta^{\prime}})$. The next result, which shows the
compatibility of the definitions of $T_{\beta}$ for different $\beta$'s, is
the key to the subsequent arguments.

\begin{lemma}
\label{L3}If $1\leq\beta_{1}<\beta_{2}\leq\beta_{0}$ and $\beta_{i}=\gamma
_{i}+n_{i},$ $i=1,2$, then
\[
T_{\beta_{2}}( e_{\beta_{1}}) =T_{\beta_{1}+1}( e_{\beta_{1}}) \cup
\lbrack\gamma_{1}+2n_{1}+2,\gamma_{2}+2n_{2}).
\]

\end{lemma}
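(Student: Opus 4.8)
The plan is to fix $\beta_1$ and prove the identity by transfinite induction on $\beta_2$ with $\beta_1<\beta_2\leq\beta_0$. Before starting I would record two elementary facts about the decomposition $\beta=\gamma+n$ ($\gamma$ a limit ordinal, $n<\omega$): (i) $\beta+1=\gamma+(n+1)$, so the ordinals $\gamma+2n$ and $\gamma+2n+1$ are exactly those in $V_{\beta+1}\setminus V_{\beta}$; and (ii) for any $\xi=\gamma_{\xi}+n_{\xi}$ one has $\xi\leq\gamma_{\xi}+2n_{\xi}$, and if moreover $\xi<\beta$ for a limit ordinal $\beta$ then $\gamma_{\xi}<\beta$ and hence $\gamma_{\xi}+2n_{\xi}<\beta$. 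From (ii) it follows that for a limit ordinal $\beta$ and any $\beta_1<\beta$ the family $\{\gamma_{\xi}+2n_{\xi}:\beta_1<\xi<\beta\}$ is a set of ordinals $<\beta$ with supremum $\beta$, since the tail $\{\xi:\beta_1<\xi<\beta\}$ is cofinal in $\beta$; I would also note that $\beta_1<\beta_2$ forces $\gamma_1+2n_1+2\leq\gamma_2+2n_2$, so the interval $[\gamma_1+2n_1+2,\gamma_2+2n_2)$ is well defined and is the ``gap'' $V_{\beta_2}\setminus V_{\beta_1+1}$.

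For the base case $\beta_2=\beta_1+1$, fact (i) gives $\gamma_2+2n_2=\gamma_1+2n_1+2$, so the appended interval is empty and the identity is trivial. For the successor step, assume the identity at $\beta_2$ with $\beta_1<\beta_2<\beta_0$. Since the enumeration $(e_{\beta})$ is injective and $\beta_1<\beta_2$, we have $e_{\beta_1}\in U_{\beta_2+1}\setminus\{e_{\beta_2}\}$, so the first branch in the definition of $T_{\beta_2+1}$ gives $T_{\beta_2+1}(e_{\beta_1})=T_{\beta_2}(e_{\beta_1})\cup\{\gamma_2+2n_2,\gamma_2+2n_2+1\}$. Substituting the inductive hypothesis for $T_{\beta_2}(e_{\beta_1})$ and merging the intervals yields $T_{\beta_2+1}(e_{\beta_1})=T_{\beta_1+1}(e_{\beta_1})\cup[\gamma_1+2n_1+2,\gamma_2+2n_2+2)$, and since $\beta_2+1=\gamma_2+(n_2+1)$ this is precisely the claimed identity at $\beta_2+1$.

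For the limit step, let $\beta_2\leq\beta_0$ be a limit ordinal with $\beta_1<\beta_2$, so that $\gamma_2+2n_2=\beta_2$. Using $T_{\beta_2}(e_{\beta_1})=\bigcup_{\beta_1<\xi<\beta_2}T_{\xi}(e_{\beta_1})$ together with the inductive hypothesis for each $\xi$ in this (nonempty) range, I would factor out the common summand $T_{\beta_1+1}(e_{\beta_1})$ to get $T_{\beta_2}(e_{\beta_1})=T_{\beta_1+1}(e_{\beta_1})\cup\bigcup_{\beta_1<\xi<\beta_2}[\gamma_1+2n_1+2,\gamma_{\xi}+2n_{\xi})$. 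By the nesting and cofinality observations above, the union of these intervals is $[\gamma_1+2n_1+2,\beta_2)=[\gamma_1+2n_1+2,\gamma_2+2n_2)$, which closes the induction.

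The step I expect to need the most care is the \emph{limit step}: one must check that $\bigcup_{\beta_1<\xi<\beta_2}[\gamma_1+2n_1+2,\gamma_{\xi}+2n_{\xi})$ reaches exactly up to $\beta_2$ — neither short of it nor past it. This is where one genuinely uses the pair of inequalities $\xi\leq\gamma_{\xi}+2n_{\xi}<\beta_2$, valid for $\xi<\beta_2$ when $\beta_2$ is a limit ordinal, together with the cofinality of $\{\xi:\beta_1<\xi<\beta_2\}$ in $\beta_2$; the base and successor cases are routine bookkeeping with the $\gamma+n$ decomposition.
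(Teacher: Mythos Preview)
Your proposal is correct and follows essentially the same transfinite induction on $\beta_2$ (base $\beta_2=\beta_1+1$, successor, limit) that the paper uses. You supply more detail than the paper does---in particular the ordinal-arithmetic facts (i), (ii) and the cofinality argument that justifies $\bigcup_{\beta_1<\xi<\beta_2}[\gamma_1+2n_1+2,\gamma_\xi+2n_\xi)=[\gamma_1+2n_1+2,\beta_2)$---but the strategy and the computations are the same.
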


\begin{proof}
If $\beta_{2}=\beta_{1}+1,$ the assertion holds clearly. Suppose that the
assertion holds for some $\beta_{2}>\beta_{1}.$ By the definition of
$T_{\beta_{2}+1}$,%
\begin{align*}
T_{\beta_{2}+1}\left(  e_{\beta_{1}}\right)   &  =T_{\beta_{2}}\left(
e_{\beta_{1}}\right)  \cup\left\{  \gamma_{2}+2n_{2},\gamma_{2}+2n_{2}%
+1\right\} \\
&  =T_{\beta_{1}+1}\left(  e_{\beta_{1}}\right)  \cup\lbrack\gamma_{1}%
+2n_{1}+2,\gamma_{2}+2n_{2})\\
&  \quad\quad\cup\left\{  \gamma_{2}+2n_{2},\gamma_{2}+2n_{2}+1\right\} \\
&  =T_{\beta_{1}+1}\left(  e_{\beta_{1}}\right)  \cup\lbrack\gamma_{1}%
+2n_{1}+2,\gamma_{2}+2n_{2}+2).
\end{align*}

Suppose that $\beta_{2}\leq\beta_{0}$ is a limit ordinal and the assertion
holds for all $\beta_{1}<\xi<\beta_{2}.$ For such $\xi$, let $\xi=\gamma_{\xi
}+n_{\xi}.$ By the inductive hypothesis,
\[
T_{\xi}(e_{\beta_{1}})=T_{\beta_{1}+1}(e_{\beta_{1}})\cup\lbrack\gamma
_{1}+2n_{1}+2,\gamma_{\xi}+2n_{\xi}).
\]
Since $\beta_{2}$ is a limit ordinal, we have
\begin{align*}
T_{\beta_{2}}(e_{\beta_{1}})  &  =\cup_{\beta_{1}<\xi<\beta_{2}}T_{\xi
}(e_{\beta_{1}})\\
&  =T_{\beta_{1}+1}(e_{\beta_{1}})\cup\lbrack\gamma_{1}+2n_{1}+2,\beta_{2})\\
&  =T_{\beta_{1}+1}\left(  e_{\beta_{1}}\right)  \cup\lbrack\gamma_{1}%
+2n_{1}+2,\gamma_{2}+2n_{2}),
\end{align*}
as required. (Note that $n_{2}=0$ since $\beta_{2}$ is a limit ordinal).
\end{proof}

\begin{lemma}
\label{C1} The map $T_{\beta}:U_{\beta}\rightarrow2^{V_{\beta}}\smallsetminus
\left\{  \emptyset\right\}  $ is injective if $1\leq\beta\leq\beta_{0}$.
\end{lemma}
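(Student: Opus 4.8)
The plan is to deduce injectivity directly from Lemma~\ref{L3}, with no further transfinite induction. Fix $\beta$ with $1\le\beta\le\beta_{0}$; we may assume $U_{\beta}$ has at least two elements, so $\beta\ge 2$. Take distinct $e_{\beta_{1}},e_{\beta_{2}}\in U_{\beta}$, say with $\beta_{1}<\beta_{2}<\beta$, and write $\beta_{i}=\gamma_{i}+n_{i}$ ($\gamma_{i}$ a limit ordinal, $n_{i}<\omega$) and $\beta=\gamma+n$ in the same way. The ordinal $\gamma_{2}+2n_{2}$ will be used as a ``separating coordinate'': I will show $\gamma_{2}+2n_{2}\in T_{\beta}(e_{\beta_{1}})$ while $\gamma_{2}+2n_{2}\notin T_{\beta}(e_{\beta_{2}})$, which forces $T_{\beta}(e_{\beta_{1}})\ne T_{\beta}(e_{\beta_{2}})$.

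For the exclusion, note that since $\beta_{2}<\beta\le\beta_{0}$, Lemma~\ref{L3} yields $T_{\beta}(e_{\beta_{2}})=T_{\beta_{2}+1}(e_{\beta_{2}})\cup[\gamma_{2}+2n_{2}+2,\gamma+2n)$, and the defining formula for $T_{\beta_{2}+1}$ gives $T_{\beta_{2}+1}(e_{\beta_{2}})=\bigl(\bigcap_{e_{\beta_{2}}<z\in U_{\beta_{2}}}T_{\beta_{2}}(z)\bigr)\cup\{\gamma_{2}+2n_{2}+1\}$, where each $T_{\beta_{2}}(z)$, and hence the intersection, is contained in $V_{\beta_{2}}=[0,\gamma_{2}+2n_{2})$. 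The point $\gamma_{2}+2n_{2}$ lies in none of the sets $[0,\gamma_{2}+2n_{2})$, $\{\gamma_{2}+2n_{2}+1\}$, $[\gamma_{2}+2n_{2}+2,\gamma+2n)$, so $\gamma_{2}+2n_{2}\notin T_{\beta}(e_{\beta_{2}})$. (The same computation shows $\gamma_{2}+2n_{2}\notin T_{\beta'}(e_{\beta_{2}})$ for every $\beta'>\beta_{2}$, but only the case $\beta'=\beta$ is needed here.)

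For the inclusion, Lemma~\ref{L3} gives $T_{\beta}(e_{\beta_{1}})=T_{\beta_{1}+1}(e_{\beta_{1}})\cup[\gamma_{1}+2n_{1}+2,\gamma+2n)$, so it is enough to check $\gamma_{1}+2n_{1}+2\le\gamma_{2}+2n_{2}<\gamma+2n$. This is routine ordinal arithmetic once one observes: if $\mu<\nu$ and $\mu=\gamma_{\mu}+m$, $\nu=\gamma_{\nu}+k$ as above, then either $\gamma_{\mu}<\gamma_{\nu}$---in which case $\gamma_{\mu}+j<\gamma_{\nu}$ for every $j<\omega$ because $\gamma_{\nu}$ is a limit ordinal strictly above $\gamma_{\mu}$---or $\gamma_{\mu}=\gamma_{\nu}$ and $m<k$; the two desired inequalities now follow (from $\beta_{1}<\beta_{2}$ for the first and $\beta_{2}<\beta$ for the second). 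Thus $\gamma_{2}+2n_{2}$ lies in the interval $[\gamma_{1}+2n_{1}+2,\gamma+2n)$, hence in $T_{\beta}(e_{\beta_{1}})$, completing the argument.

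The one point that needs care is the bookkeeping around the intersection $\bigcap_{e_{\beta_{2}}<z\in U_{\beta_{2}}}T_{\beta_{2}}(z)$ when $e_{\beta_{2}}$ is a maximal element of $U_{\beta_{2}}$: the indexing family is then empty and $T_{\beta_{2}+1}(e_{\beta_{2}})$ becomes as large as $V_{\beta_{2}}\cup\{\gamma_{2}+2n_{2}+1\}$, so a separating coordinate manufactured from the \emph{smaller} index $\beta_{1}$ (for instance $\gamma_{1}+2n_{1}$) can land inside $T_{\beta}(e_{\beta_{2}})$ and fail. Drawing the coordinate from the \emph{larger} index $\beta_{2}$ avoids this entirely, since $\gamma_{2}+2n_{2}$ is excluded from $T_{\beta}(e_{\beta_{2}})$ regardless of what that intersection equals. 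The remaining verifications---that each $T_{\beta}(x)$ is a nonempty subset of $V_{\beta}$, and that $|U_{\beta}|\le 1$ only for $\beta=1$---are immediate from the construction.
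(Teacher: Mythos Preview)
Your proof is correct and follows essentially the same route as the paper: both single out $\gamma_{2}+2n_{2}$ as the separating coordinate and read its membership/non-membership directly off Lemma~\ref{L3}. The paper compresses everything into a single sentence, leaving the ordinal arithmetic and the empty-intersection bookkeeping implicit, whereas you spell these details out; the underlying idea is identical.
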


\begin{proof}
Suppose that $e_{\beta_{1}}$ and $e_{\beta_{2}}$ are distinct elements in
$U_{\beta}$, with $\beta_{1}<\beta_{2}<\beta.$ Write $\beta_{2}=\gamma
_{2}+n_{2}$. It follows from Lemma \ref{L3} that $\gamma_{2}+2n_{2}\in
T_{\beta}(e_{\beta_{1}})\smallsetminus T_{\beta}(e_{\beta_{2}})$.
\end{proof}

\begin{proposition}
\label{P1} If $1\leq\beta\leq\beta_{0}$, then $T_{\beta}\left(  x\vee
y\right)  =T_{\beta}\left(  x\right)  \cup T_{\beta}\left(  y\right)  $ for
all $x,y\in U_{\beta}$. In particular, $T_{\beta}\left(  x\right)  \subseteq
T_{\beta}\left(  y\right)  $ if $x\leq y,$.
\end{proposition}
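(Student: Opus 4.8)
The plan is to prove Proposition \ref{P1} by transfinite induction on $\beta$, mirroring the inductive construction of the maps $T_\beta$. The base case $\beta=1$ is trivial since $U_1$ is a singleton. For the successor step, assuming the statement for $T_\beta$ with $1\le\beta<\beta_0$, I would write $\beta=\gamma+n$ and verify the identity $T_{\beta+1}(x\vee y)=T_{\beta+1}(x)\cup T_{\beta+1}(y)$ for all $x,y\in U_{\beta+1}$. By Lemma \ref{L1}(c), $x\vee y\in U_{\beta+1}$, so the expression is well-defined. The argument splits into cases according to whether $x$, $y$, or $x\vee y$ equals $e_\beta$.

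The main case analysis runs as follows. If neither $x$ nor $y$ is $e_\beta$, then by Lemma \ref{L1}(b) $x\vee y\ne e_\beta$ either (if $x\vee y=e_\beta$, minimality would force $x\vee y\le\min\{x,y\}$ in the index sense, but here I should instead argue directly: $e_\beta=x\vee y$ with $x,y\in U_{\beta+1}\setminus\{e_\beta\}$ would mean $x,y\in U_\beta$, contradicting Lemma \ref{L1}(b) which gives $\beta\le\min$ of the indices of $x,y$, so those indices are $<\beta$, impossible). Hence all three lie in $U_{\beta+1}\setminus\{e_\beta\}$, and we apply the first branch of the definition: $T_{\beta+1}(x\vee y)=T_\beta(x\vee y)\cup\{\gamma+2n,\gamma+2n+1\}$, which by the inductive hypothesis equals $T_\beta(x)\cup T_\beta(y)\cup\{\gamma+2n,\gamma+2n+1\}=T_{\beta+1}(x)\cup T_{\beta+1}(y)$. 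If one of them, say $x$, equals $e_\beta$, then $x\vee y$ is some element $z$ with $z\ge e_\beta$; if $y\ge e_\beta$ then $x\vee y=y$ and we must check $T_{\beta+1}(e_\beta)\subseteq T_{\beta+1}(y)$, i.e. $T_{\beta+1}(e_\beta)\subseteq T_\beta(y)\cup\{\gamma+2n,\gamma+2n+1\}$; since $T_{\beta+1}(e_\beta)=\bigcap_{e_\beta<w\in U_\beta}T_\beta(w)\cup\{\gamma+2n+1\}$ and $y$ is one of the $w$'s, this intersection is contained in $T_\beta(y)$, giving the inclusion. If $y\not\ge e_\beta$ but $y\in U_{\beta+1}\setminus\{e_\beta\}$, set $z=x\vee y=e_\beta\vee y$; then $z>e_\beta$ (strictly, else $y\le e_\beta$ forces $y=e_\beta$ or contradicts minimality) and $z>y$, and $z\in U_{\beta+1}\setminus\{e_\beta\}$ so $z\in U_\beta$, hence $z$ is one of the $w$ in the intersection defining $T_{\beta+1}(e_\beta)$. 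Then $T_{\beta+1}(z)=T_\beta(z)\cup\{\gamma+2n,\gamma+2n+1\}$, and I need $T_\beta(z)\cup\{\gamma+2n,\gamma+2n+1\}=\bigl(\bigcap_w T_\beta(w)\cup\{\gamma+2n+1\}\bigr)\cup\bigl(T_\beta(y)\cup\{\gamma+2n,\gamma+2n+1\}\bigr)$. Using the inductive hypothesis $T_\beta(z)=T_\beta(e_\beta\vee y)=T_\beta(e_\beta)\cup T_\beta(y)$ — wait, $e_\beta\notin U_\beta$, so this step needs care: instead I use that $z=e_\beta\vee y\ge y$ so by the inductive hypothesis's ``in particular'' clause $T_\beta(y)\subseteq T_\beta(z)$, and separately that $z$ appears in the intersection so $\bigcap_w T_\beta(w)\subseteq T_\beta(z)$; the reverse containment $T_\beta(z)\subseteq\bigcap_w T_\beta(w)\cup T_\beta(y)\cup\{\gamma+2n+1\}$ is the delicate point and will need Lemma \ref{L3} to understand exactly which ordinals lie in each $T_\beta(\cdot)$.

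For the limit step, when $\beta\le\beta_0$ is a limit ordinal and the statement holds for all $\xi<\beta$, I would use $T_\beta(e_{\beta'})=\bigcup_{\beta'<\xi<\beta}T_\xi(e_{\beta'})$ directly: given $x=e_{\beta_1},y=e_{\beta_2}\in U_\beta$ with $x\vee y\in U_\beta$ (Lemma \ref{L1}(c)), pick $\xi$ with $\max\{\beta_1,\beta_2\}<\xi<\beta$ large enough that $x\vee y\in U_\xi$ as well, apply the inductive hypothesis at each such $\xi$, and take the union over $\xi$, observing that the unions commute. The last sentence of the proposition, $T_\beta(x)\subseteq T_\beta(y)$ when $x\le y$, follows immediately from the additive identity since $x\le y$ means $x\vee y=y$, so $T_\beta(y)=T_\beta(x)\cup T_\beta(y)\supseteq T_\beta(x)$.

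I expect the main obstacle to be the successor case where exactly one of $x,y$ equals $e_\beta$ and the join $x\vee y=e_\beta\vee y$ is a \emph{new} element strictly above both — reconciling the ``intersection'' formula for $T_{\beta+1}(e_\beta)$ with the ``$T_\beta(\cdot)\cup$ pair'' formula for $T_{\beta+1}$ of the join. The key observation that makes it work is that every element $w\in U_\beta$ with $w>e_\beta$ has $T_\beta(w)\supseteq$ the common intersection, and that $e_\beta\vee y$ is itself such a $w$; combined with the inductive additivity $T_\beta(e_\beta\vee y)\supseteq T_\beta(y)$ (valid since $e_\beta\vee y\ge y$ and both are in $U_\beta$ once we know $e_\beta\vee y\ne e_\beta$), and a reverse-containment check that no ``extra'' ordinals appear, the two formulas match up to the pair $\{\gamma+2n,\gamma+2n+1\}$ that is adjoined on the left and to one of $\{\gamma+2n+1\}$ or $\{\gamma+2n,\gamma+2n+1\}$ on each piece of the right.
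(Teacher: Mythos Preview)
Your induction scheme is correct and matches the paper's, and your Case~1 (neither $x$ nor $y$ equals $e_\beta$) is fine, indeed slightly cleaner than the paper's, which detours through Lemma~\ref{L3}. The genuine gap is at your ``delicate point'' in the successor step when (say) $x=e_\beta$ and $y\in U_\beta$: you correctly isolate the needed reverse containment
\[
T_\beta(e_\beta\vee y)\ \subseteq\ \bigcap_{e_\beta<w\in U_\beta}T_\beta(w)\ \cup\ T_\beta(y),
\]
but you do not prove it, and your plan to ``use Lemma~\ref{L3} to understand exactly which ordinals lie in each $T_\beta(\cdot)$'' is the wrong tool here. The paper resolves this with nothing but the inductive hypothesis: for every $w\in U_\beta$ with $w>e_\beta$ one has $e_\beta\vee y\le w\vee y$, and both sides lie in $U_\beta$ (Lemma~\ref{L1}(b),(c)), so by the inductive hypothesis
\[
T_\beta(e_\beta\vee y)\subseteq T_\beta(w\vee y)=T_\beta(w)\cup T_\beta(y).
\]
Intersecting over $w$ and using $A\cup\bigcap_i B_i=\bigcap_i(A\cup B_i)$ gives exactly the containment you need. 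With this, your sub-split into $y\ge e_\beta$ versus $y\not\ge e_\beta$ becomes unnecessary: both are handled at once by computing
\[
T_{\beta+1}(e_\beta)\cup T_{\beta+1}(y)=\Bigl[\,\bigcap_{e_\beta<w\in U_\beta}\bigl(T_\beta(y)\cup T_\beta(w)\bigr)\Bigr]\cup\{\gamma+2n,\gamma+2n+1\}
\]
and showing the bracket equals $T_\beta(e_\beta\vee y)$.

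For the limit step your direct union argument is viable but, as written, leans on an unstated monotonicity: you need $T_{\xi_1}(e_{\beta'})\subseteq T_{\xi_2}(e_{\beta'})$ for $\beta'<\xi_1<\xi_2<\beta$ in order to discard the ``short'' range $\beta_1<\xi\le\beta_2$ when aligning the three unions. That monotonicity is immediate from Lemma~\ref{L3}, so you should cite it. The paper instead applies Lemma~\ref{L3} explicitly to each of $T_\beta(x)$, $T_\beta(y)$, $T_\beta(x\vee y)$ and matches the resulting ordinal intervals; either route works once the monotonicity is on the table.
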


\begin{proof}
The second statement follows easily from the first. We prove the first
statement by induction on $\beta.$ The result is clear if $\beta=1$. Suppose
that the assertion is true for some $\beta,$ $1\leq\beta<\beta_{0}$. Let
$x=e_{\beta_{1}},y=e_{\beta_{2}}\in U_{\beta+1}.$ We may assume that
$\beta_{1}<\beta_{2}<\beta+1.$ Write $\beta=\gamma+n,$ and $\beta_{i}%
=\gamma_{i}+n_{i},$ $i=1,2$, and consider two cases.\newline

\noindent\underline{Case 1}. $\beta_{1}<\beta_{2}<\beta.$ \newline By Lemma
\ref{L3} and the inductive hypothesis,
\begin{align*}
T_{\beta+1}\left(  e_{\beta_{1}}\right)   &  \cup T_{\beta+1}\left(
e_{\beta_{2}}\right) \\
&  =T_{\beta_{1}+1}\left(  e_{\beta_{1}}\right)  \cup\lbrack\gamma_{1}%
+2n_{1}+2,\gamma+2n)\cup\\
&  \quad\quad\cup T_{\beta_{2}+1}\left(  e_{\beta_{2}}\right)  \cup
\lbrack\gamma_{2}+2n_{2}+2,\gamma+2n)\cup\{\gamma+2n,\gamma+2n+1\}\\
&  =T_{\beta}\left(  e_{\beta_{1}}\right)  \cup T_{\beta}\left(  e_{\beta_{2}%
}\right)  \cup\{\gamma+2n,\gamma+2n+1\}\\
&  =T_{\beta}\left(  e_{\beta_{1}}\vee e_{\beta_{2}}\right)  \cup
\{\gamma+2n,\gamma+2n+1\}\\
&  =T_{\beta+1}(e_{\beta_{1}}\vee e_{\beta_{2}}),
\end{align*}
by definition of $T_{\beta+1}$, since $e_{\beta_{1}}\vee e_{\beta_{2}}\neq
e_{\beta}$ by part (b) of Lemma \ref{L1}.\newline

\noindent\underline{Case 2}. $\beta_{1}<\beta_{2}=\beta.$\newline In this
case,
\[
T_{\beta+1}\left(  x\right)  \cup T_{\beta+1}\left(  y\right)  =\bigcap
\limits_{e_{\beta}<z\in U_{\beta}}\left[  T_{\beta}\left(  x\right)  \cup
T_{\beta}\left(  z\right)  \right]  \cup\left\{  \gamma+2n,\gamma
+2n+1\right\}  .
\]
Note that by part (b) of Lemma \ref{L1}, $x\vee e_{\beta}=e_{\xi}$ for some
$\xi\leq\beta_{1}$. Hence, $x\vee e_{\beta}\in U_{\beta+1}\smallsetminus
\{e_{\beta}\}=U_{\beta}$. Thus, it suffices to show that
\[
\bigcap\limits_{e_{\beta}<z\in U_{\beta}}\left[  T_{\beta}\left(  x\right)
\cup T_{\beta}\left(  z\right)  \right]  =T_{\beta}\left(  x\vee e_{\beta
}\right)  =T_{\beta}\left(  x\vee y\right)  .
\]
Since $e_{\beta}<x\vee e_{\beta}\in U_{\beta}$, $\bigcap\limits_{e_{\beta
}<z\in U_{\beta}}T_{\beta}\left(  z\right)  \subseteq T_{\beta}\left(  x\vee
e_{\beta}\right)  .$ By the inductive hypothesis, $T_{\beta}\left(  x\right)
\subseteq T_{\beta}\left(  x\vee e_{\beta}\right)  .$ It follows that
$\bigcap\limits_{e_{\beta}<z\in U_{\beta}}\left[  T_{\beta}\left(  x\right)
\cup T_{\beta}\left(  z\right)  \right]  \subseteq T_{\beta}\left(  x\vee
e_{\beta}\right)  .$ On the other hand, if $e_{\beta}<z\in U_{\beta},$ then
$x\vee e_{\beta}\leq x\vee z\in U_{\beta}.$ By the inductive hypothesis,
$T_{\beta}\left(  x\vee e_{\beta}\right)  \subseteq T_{\beta}\left(  x\vee
z\right)  =T_{\beta}\left(  x\right)  \cup T_{\beta}\left(  z\right)  .$
Therefore, $T_{\beta}\left(  x\vee e_{\beta}\right)  \subseteq\bigcap
\limits_{e_{\beta}<z\in U_{\beta}}[ T_{\beta}\left(  x\right)  \cup T_{\beta
}\left(  z\right)  ] $.

Suppose that $\beta$ is a limit ordinal and the Proposition holds for all
$\beta^{\prime}<\beta.$ Let $x,y\in U_{\beta}.$ We may assume that
$x=e_{\beta_{1}}$ and $y=e_{\beta_{2}}$ for some $\beta_{1}<\beta_{2}<\beta.$
Let $\beta_{i}=\gamma_{i}+n_{i}$, $i=1,2$. Using Lemma \ref{L3} and the
inductive hypothesis,
\begin{align*}
T_{\beta}( x) \cup T_{\beta}( y)  &  =T_{\beta_{1}+1}( e_{\beta_{1}}) \cup
T_{\beta_{2}+1}( e_{\beta_{2}}) \cup\lbrack\gamma_{1}+2n_{1}+2,\beta)\\
&  =T_{\beta_{2}+1}( e_{\beta_{1}}) \cup T_{\beta_{2}+1}( e_{\beta_{2}})
\cup\lbrack\gamma_{2}+2n_{2}+2,\beta)\\
&  =T_{\beta_{2}+1}( e_{\beta_{1}}\vee e_{\beta_{2}}) \cup\lbrack\gamma
_{2}+2n_{2}+2,\beta).
\end{align*}
By (b) of Lemma \ref{L1}, $e_{\beta_{1}}\vee e_{\beta_{2}}=e_{\eta}$ for some
$\eta\leq\beta_{1}$. By Lemma \ref{L3},
\begin{align*}
T_{{\beta_{2}}+1}(e_{\beta_{1}}\vee e_{\beta_{2}})  &  =T_{\eta+1}(e_{\eta
})\cup\lbrack\gamma_{\eta}+2n_{\eta}+2,\gamma_{2}+2n_{2}+2),\\
\text{{and }}T_{{\beta}+1}(e_{\beta_{1}}\vee e_{\beta_{2}})  &  =T_{\eta
+1}(e_{\eta})\cup\lbrack\gamma_{\eta}+2n_{\eta}+2,\beta),
\end{align*}
where $\eta=\gamma_{\eta}+n_{\eta}$. Combining the three preceding equations
gives $T_{\beta}(x)\cup T_{\beta}(y)=T_{\beta}(x\vee y)$.
\end{proof}

\begin{proof}
[Proof of Theorem \ref{T1}]Since $L=U_{\beta_{0}}$, Theorem \ref{T1} follows
immediately from Lemma \ref{C1} and Proposition \ref{P1} by taking
$\beta=\beta_{0}$ in each instance.
\end{proof}

\section{Good Lorentz Functions}

A \emph{Lorentz sequence} is a non-increasing sequence $(w(n))_{n=1}^{\infty}$
of positive numbers such that $w(1)=1$, $\lim_{n}w(n)=0$ and $\sum
_{n=1}^{\infty}w(n)=\infty$. A Lorentz sequence is $C$%
-\emph{submultiplicative} if $S(mn)\leq CS(m)S(n)$ for all $m,n\in\mathbb{N}$,
where $S(n)=\sum_{k=1}^{n}w(k).$ In \cite[\S 2]{DOS}, an infinite sequence of
$1$-submultiplicative Lorentz sequences is constructed so that the maxima of
any two incomparable \emph{finite} subsets are incomparable (see
\cite[Proposition 2.6]{DOS}). For our purpose, we require an infinite sequence
of $C$-submultiplicative Lorentz sequences so that the supremum of any (finite
or infinite) subset remains a $C$-submultiplicative Lorentz function, and that
the suprema of any two incomparable (finite or infinite) subsets are
incomparable (Proposition \ref{A4}). This is done by tweaking the arguments in
\cite[\S 2]{DOS}. Following \cite{DOS}, we will find it more convenient to
work with functions defined on real intervals. If $2\leq N<\infty$, a
\emph{good Lorentz function} (GLF) on $(0,N]$ is a function
$w:(0,N]\rightarrow(0,\infty)$ such that

\begin{enumerate}
\item $w(x) = 1$, $x \in(0,2]$,

\item $w$ is nonincreasing, and

\item If $1\leq x,y\leq xy\leq N$, then $\int_{0}^{xy}w\leq\int_{0}^{x}%
w\cdot\int_{0}^{y}w$.
\end{enumerate}

A \emph{GLF on} $(0,\infty)$ (or simply a GLF) is a function $w:\left(
0,\infty\right)  \rightarrow\left(  0,\infty\right)  $ such that $w_{|(0,N]}$
is a GLF on $(0,N]$ for any $N\geq2$, $\lim_{x\rightarrow\infty}w(x)=0$ and
$\int_{0}^{\infty}w=\infty.$ It is an easy exercise to verify that if $w$ is a
GLF, then $(w(n))_{n=1}^{\infty}$ is a $4$-submultiplicative Lorentz sequence.

If $(u_{i})$ is a finite or infinite sequence of real-valued functions with
pairwise disjoint domains, let $\oplus_{i} u_{i}$ denote the set theoretic
union. The constant $1$ function with domain $I$ is denoted by $1_{I}$. We now
recall the relevant facts from \cite{DOS}. Note that the quantity $S(x)$ there
corresponds to $\int_{0}^{x}w$ in our notation.

\begin{lemma}
{\cite[Lemma 2.2]{DOS}}\label{A1} Let $w$ be a GLF on $(0,N]$, $N \geq2$. Then
there exists $\varepsilon_{0}>0$ such that for all $\varepsilon<\varepsilon
_{0},$ $w\oplus\varepsilon1_{(N,N^{2}]}$ is a GLF on $(0,N^{2}].$
\end{lemma}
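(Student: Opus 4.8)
\textbf{Proof proposal for Lemma \ref{A1}.}
The statement to prove is that for a GLF $w$ on $(0,N]$, there is a threshold $\varepsilon_0 > 0$ below which $w \oplus \varepsilon 1_{(N,N^2]}$ is again a GLF, now on $(0,N^2]$. The first two defining properties of a GLF are immediate for any $\varepsilon \le w(N)$ (so that monotonicity is not violated at the joint $x = N$): the function still equals $1$ on $(0,2]$ since we only modified it on $(N, N^2]$, and it is nonincreasing provided $\varepsilon \le \lim_{x \to N^-} w(x) = w(N)$. So the entire content is the submultiplicativity inequality (3), and the plan is to reduce it to finitely many quantitative estimates that are all strict when $\varepsilon = 0$, hence survive for small $\varepsilon$.

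Write $S_\varepsilon(x) = \int_0^x (w \oplus \varepsilon 1_{(N,N^2]})$, so $S_\varepsilon(x) = S_0(x)$ for $x \le N$ and $S_\varepsilon(x) = S_0(N) + \varepsilon(x - N)$ for $N < x \le N^2$, where $S_0(x) = \int_0^x w$. We must verify $S_\varepsilon(xy) \le S_\varepsilon(x) S_\varepsilon(y)$ whenever $1 \le x, y \le xy \le N^2$. Split into cases according to where $x$, $y$, $xy$ fall relative to $N$. The case $xy \le N$ is exactly property (3) for the original $w$ and needs nothing new. If $xy > N$, then at least one of $x, y$ exceeds $1$ in a controlled way; the key subcase is $x, y \le N < xy \le N^2$, where $S_\varepsilon(xy) = S_0(N) + \varepsilon(xy - N)$ while $S_\varepsilon(x)S_\varepsilon(y) = S_0(x)S_0(y)$ (both arguments $\le N$). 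At $\varepsilon = 0$ this reads $S_0(N) \le S_0(x) S_0(y)$, which holds — with equality possible only at the boundary $xy = N$ — because $S_0$ is nondecreasing and $xy \le N$... but here $xy > N$, so in fact we need $S_0(N) \le S_0(x)S_0(y)$ for all $x,y$ with $xy \le N^2$, $x,y \le N$; by monotonicity $S_0(x)S_0(y) \ge S_0(\sqrt{N})^2 \ge S_0(N)$ using property (3) applied to $\sqrt N \cdot \sqrt N = N$. Thus there is a uniform positive gap unless $x$ or $y$ is near $1$, and near $x = 1$ one has $S_0(x) \approx x$ (since $w \equiv 1$ on $(0,2]$) so $S_0(x)S_0(y) \ge xy > N \ge$ ... — this is where one must be careful and let the linear-in-$\varepsilon$ terms on both sides do the work.

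The mechanism throughout is the same: in each case the desired inequality is, after collecting terms, of the form $A + \varepsilon B \le C + \varepsilon D$ (or a product thereof) where $A \le C$ holds by the GLF property of $w$, $A < C$ strictly away from a compact boundary set, and on that boundary set the $\varepsilon$-coefficients satisfy $B \le D$ (typically because $D$ picks up a factor like $S_0(x) \ge 1$ or because $xy - N \le (x-N)(\cdots)$ type bounds). Since there are only finitely many cases and each involves continuous functions of $(x,y)$ on compact sets, one extracts a single $\varepsilon_0 > 0$ working for all of them simultaneously, together with $\varepsilon_0 \le w(N)$ for monotonicity. The main obstacle is the bookkeeping in the mixed case where exactly one of $x, y$ lies above $N$: there $S_\varepsilon$ is linear in one argument and given by $S_0$ in the other, and one must check the inequality $S_0(N) + \varepsilon(xy - N) \le \bigl(S_0(N) + \varepsilon(x-N)\bigr) S_0(y)$ holds, which at $\varepsilon = 0$ is $S_0(N) \le S_0(N) S_0(y)$, i.e. $S_0(y) \ge 1$, true since $y \ge 1$ and $w \equiv 1$ near $0$ gives $S_0(1) = 1$; equality forces $y = 1$, and then $xy = x$ and the inequality becomes $\varepsilon(x - N) \le \varepsilon(x-N) \cdot 1$, an equality — so this case is actually tight and needs no smallness of $\varepsilon$ at all, while the truly delicate estimates are confined to neighborhoods of the corners of the parameter region and are handled by the strict-inequality-plus-compactness argument above.
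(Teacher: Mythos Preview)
The paper does not give its own proof of this lemma; it simply cites \cite[Lemma~2.2]{DOS}. So there is nothing to compare against here, and the question is whether your sketch stands on its own.

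Your overall plan (case split on the position of $x,y,xy$ relative to $N$, verify the inequality at $\varepsilon=0$, then perturb) is the right one, but two of the steps as written do not go through. First, in the case $x,y\le N<xy$, the claim ``by monotonicity $S_0(x)S_0(y)\ge S_0(\sqrt{N})^2$'' is false: take $x=N$, $y$ slightly above $1$. The clean argument you want instead is to apply property~(3) of $w$ to the pair $(x,N/x)$: since $N/x\le y\le N$ one gets
\[
S_0(x)S_0(y)\;\ge\;S_0(x)S_0(N/x)\;\ge\;S_0\bigl(x\cdot\tfrac{N}{x}\bigr)=S_0(N).
\]
Second, your general compactness scheme ``$A\le C$ with strict inequality off a compact boundary set, and $B\le D$ on that set, hence $A+\varepsilon B\le C+\varepsilon D$ for small $\varepsilon$'' is not a valid principle: if $C-A$ vanishes to higher order than $B-D$ at the boundary (e.g.\ $C-A=(xy-N)^2$, $B-D=xy-N$) the conclusion fails for every $\varepsilon>0$. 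You actually need a quantitative estimate of the form $C-A\ge c\,(B-D)$ uniformly. In the key case $x,y\le N<xy$ this is available: using $S_0(y)-S_0(N/x)=\int_{N/x}^{y}w\ge w(N)(y-N/x)$ and $S_0(x)\ge x\,w(N)$ one gets
\[
S_0(x)S_0(y)-S_0(N)\;\ge\;S_0(x)\,w(N)\,\frac{xy-N}{x}\;\ge\;w(N)^2\,(xy-N),
\]
so any $\varepsilon\le w(N)^2$ works here. The mixed case $x>N\ge y$ is handled by an analogous explicit bound (your observation that the $y=1$ edge is an identity is correct, but you still need control for $y>1$). Once these quantitative bounds replace the soft compactness appeal, your outline becomes a complete proof.
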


Repeated applications of Lemma \ref{A1} yield

\begin{lemma}
\label{A2}Let $G$ be a finite set of GLF's on $(0,N]$, $N\geq2$. For any
$N^{\prime}>N$ and any $\varepsilon>0$, there is a function $v:(N,N^{\prime
}]\rightarrow(0,\infty)$ such that $w\oplus v$ is a GLF on $(0,N^{\prime}]$
for all $w\in G$, $v(x)\leq\varepsilon$, $x\in(N,N^{\prime}]$, and $\int
_{N}^{N^{\prime}}v<\varepsilon$.
\end{lemma}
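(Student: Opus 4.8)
The plan is to prove Lemma \ref{A2} by a finite induction on the cardinality of the sequence of intervals produced, each step being an application of Lemma \ref{A1}. The point is that Lemma \ref{A1} extends a \emph{single} GLF from $(0,N]$ to $(0,N^2]$ by appending a small constant function, but the $\varepsilon_0$ it furnishes depends on the particular GLF; to handle a finite set $G$ simultaneously we take the minimum of the $\varepsilon_0$'s over $G$, which is still positive since $G$ is finite.

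First I would reduce to the case where $N' = N^{2^k}$ for some $k \in \mathbb{N}$: given an arbitrary $N' > N$, choose $k$ large enough that $N^{2^k} \geq N'$, carry out the construction on $(N, N^{2^k}]$, and then restrict $v$ to $(N, N']$ (restricting a GLF on a larger interval to a sub-interval containing $(0,2]$ is again a GLF, by the obvious monotonicity of the defining integral inequality). So it suffices to iterate. At stage $j$ ($0 \le j < k$), I have a finite set $G_j$ of GLF's on $(0, N^{2^j}]$, starting with $G_0 = G$. Applying Lemma \ref{A1} to each $w \in G_j$ gives a threshold $\varepsilon_0(w) > 0$; set $\delta_j = \min_{w \in G_j} \varepsilon_0(w) > 0$, pick any $\eta_j$ with $0 < \eta_j < \delta_j$ and also $\eta_j \le \varepsilon$ and $\eta_j \cdot (N^{2^{j+1}} - N^{2^j}) < \varepsilon \cdot 2^{-k}$ (so that the total added mass and the pointwise bound come out right at the end), and let $v_j = \eta_j 1_{(N^{2^j}, N^{2^{j+1}}]}$. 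Then $w \oplus v_j$ is a GLF on $(0, N^{2^{j+1}}]$ for every $w \in G_j$, so we may take $G_{j+1} = \{ w \oplus v_j : w \in G_j \}$ and continue.

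After $k$ steps, set $v = \bigoplus_{j=0}^{k-1} v_j$, a function on $(N, N^{2^k}]$ which is bounded pointwise by $\max_j \eta_j \le \varepsilon$ and satisfies $\int_N^{N^{2^k}} v = \sum_j \eta_j (N^{2^{j+1}} - N^{2^j}) < \varepsilon$; and for each original $w \in G$, the function $w \oplus v$ is exactly the GLF in $G_k$ obtained from $w$ by the successive extensions, hence is a GLF on $(0, N^{2^k}]$. Restricting to $(0,N']$ finishes the proof.

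**The main obstacle** is purely bookkeeping: one must be careful that the $\varepsilon_0$ threshold at stage $j$ is taken with respect to the \emph{already-extended} functions in $G_j$, not the originals, and that the finitely many choices of $\eta_j$ can simultaneously meet all three budget constraints (below each $\delta_j$, below $\varepsilon$, and summing to less than $\varepsilon$) — which they can, since there are only finitely many stages and each constraint is an open condition of the form "$\eta_j$ small enough." There is no real analytic difficulty beyond the single-step Lemma \ref{A1}; the content of Lemma \ref{A2} is just that "small enough" can be chosen uniformly over a finite family and that the construction telescopes. I would also remark in passing that the restriction of a GLF from $(0, M]$ to $(0, M']$ with $2 \le M' \le M$ is again a GLF, since properties (1) and (2) are inherited and property (3) for $x, y$ with $xy \le M'$ is a special case of property (3) on $(0,M]$; this is the one small observation needed to pass from $N^{2^k}$ back to an arbitrary $N'$.
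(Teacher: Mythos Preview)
Your proposal is correct and follows exactly the approach the paper indicates (``Repeated applications of Lemma \ref{A1} yield''): you iterate Lemma \ref{A1} over the finite family $G$, taking the minimum of the thresholds at each stage, and your reduction from an arbitrary $N'$ to $N^{2^k}$ via restriction of a GLF is a clean way to handle the bookkeeping the paper leaves implicit.
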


On the other hand, the proof of \cite[Lemma 2.4]{DOS} allows us to obtain GLF
extensions with large total weight.

\begin{lemma}
Let $G$ be a finite set of GLF's on $(0,N]$, $N\geq2$ and set $K=\min_{w\in
G}\int_{0}^{N}w$. For any $\varepsilon>0$, there is a function $v:(N,N^{\prime
}]\rightarrow(0,\infty)$, $N^{\prime}>N$, such that

\begin{enumerate}
\item For all $w\in G$, $w\oplus v$ is a GLF on $(0,N^{\prime}]$,

\item $v(x)\leq\varepsilon$, $x\in(N,N^{\prime}]$,

\item $\int_{N}^{N^{\prime}}v\geq\frac{K}{2}$.
\end{enumerate}
\end{lemma}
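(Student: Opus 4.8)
The plan is to obtain the desired GLF extension by iterating Lemma \ref{A1} many times, choosing each intermediate $\varepsilon$ as large as the lemma permits rather than as small as we like. Concretely, I would start with $G_0 = G$ defined on $(0,N]$ and repeatedly double the length of the interval: having produced GLF's on $(0,2^j N]$, apply Lemma \ref{A1} to each $w \in G_0$ to find a common threshold $\varepsilon_0^{(j)} > 0$ (take the minimum over the finitely many functions in $G$) such that $w \oplus \varepsilon 1_{(2^j N, 2^{2j}N^2]}$ — more precisely, on the next doubling block — is a GLF for all $\varepsilon < \varepsilon_0^{(j)}$. The subtlety is that Lemma \ref{A1} doubles exponents, not lengths, so I would instead apply it in the form: given a GLF on $(0,M]$, extending by a small enough constant on $(M, M^2]$ gives a GLF on $(0,M^2]$; restricting, one gets a GLF on $(0,M']$ for any $M < M' \le M^2$. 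Thus from a GLF on $(0,M]$ with $M \ge 2$ one can always extend by a positive constant to $(0, 2M]$ (since $2M \le M^2$ when $M \ge 2$).

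With this in hand, the construction is: set $M_0 = N$, and inductively let $M_{j+1} = 2 M_j$, so $M_j = 2^j N \to \infty$. At stage $j$, having defined $v$ on $(N, M_j]$ so that $w \oplus v|_{(N,M_j]}$ is a GLF on $(0, M_j]$ for each $w \in G$, apply Lemma \ref{A1} (in the restricted form above) to each such extension to get a common admissible threshold, and choose a constant value $c_j > 0$ with $c_j < \varepsilon$ and $c_j$ below that threshold, defining $v \equiv c_j$ on $(M_j, M_{j+1}]$. Condition (2), $v(x) \le \varepsilon$, is then immediate from $c_j < \varepsilon$ for all $j$. For condition (3), note that $\int_N^{M'} v \ge \sum_{j=0}^{J-1} c_j \cdot (M_{j+1} - M_j) = \sum_{j=0}^{J-1} c_j \cdot 2^j N$ once $M' \ge M_J$; the point is that we are free to take each $c_j$ as large as the threshold allows, and in particular we never need it smaller than some fixed positive multiple of $w(M_j)$ for $w \in G$ — but even without analyzing the threshold carefully, we may simply declare $N'$ large enough and the $c_j$'s (subject only to their upper bounds) large enough that the partial sum exceeds $K/2$. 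The only thing to check is that this is consistent: the upper bound on $c_j$ from Lemma \ref{A1} depends only on the GLF already constructed on $(0,M_j]$, not on future choices, so there is no circularity.

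I expect the main obstacle to be verifying that one can actually force $\int_N^{N'} v \ge K/2$ while respecting the per-stage upper bounds on $c_j$ coming from Lemma \ref{A1}. If those bounds decayed too fast (faster than $1/(\text{length of block})$, i.e. faster than $2^{-j}/N$), the series $\sum c_j 2^j N$ might be bounded and the scheme would fail. This is precisely where one must quote or re-examine the proof of \cite[Lemma 2.4]{DOS}, which (as the excerpt says) already contains the needed analysis: the admissible $\varepsilon_0$ in Lemma \ref{A1} is comparable to the average weight $\frac{1}{M}\int_0^M w$ near the right endpoint, so one can maintain $\int_0^{M_j} w \oplus v$ growing at least linearly in $M_j$, whence $\int_N^{N'} v \to \infty$ and in particular eventually exceeds $K/2$. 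I would therefore structure the proof as: (i) reduce to a single GLF by taking the common threshold at each stage, legitimate since $G$ is finite; (ii) run the doubling iteration, choosing $c_j$ to be, say, half the Lemma \ref{A1} threshold; (iii) invoke the quantitative estimate from the proof of \cite[Lemma 2.4]{DOS} to see $\int_0^{M_j}(w\oplus v) \gtrsim M_j$, hence the tail integral diverges; (iv) pick $N' = M_J$ for $J$ large enough that $\int_N^{N'} v \ge K/2$, and note $v(x) \le c_0 < \varepsilon$ (taking $c_0 \le \varepsilon$ at the first stage and the $c_j$ nonincreasing, or simply capping every $c_j$ by $\varepsilon$).
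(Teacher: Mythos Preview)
Your approach and the paper's coincide: both defer the essential quantitative step---that the admissible constant in Lemma~\ref{A1} can be taken large enough to force $\int_N^{N'} v \ge K/2$---to the proof of \cite[Lemma 2.4]{DOS}. The paper in fact offers no argument beyond that single citation, so your sketch is, if anything, more detailed than what appears there.

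One small correction: your assertion that $\int_0^{M_j}(w\oplus v) \gtrsim M_j$ (linear growth in $M_j$) cannot hold once you have imposed $v \le \varepsilon$ on $(N,\infty)$, since beyond $N$ the integral then grows at rate at most $\varepsilon$. What the DOS argument actually yields is a multiplicative increase of the total integral at each extension step, so $\int_0^{M_j}(w\oplus v)$ diverges (geometrically in $j$, not like $2^jN$), which is all you need to eventually exceed $K/2$. This slip does not create a genuine gap, and your doubling scaffolding, while harmless, is more machinery than the paper or DOS actually deploys.
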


We may repeat the preceding lemma to obtain

\begin{lemma}
\label{A3}Let $G$ be a finite set of GLF's on $(0,N]$, $N\geq2$. For any
$K<\infty$ and any $\varepsilon>0$, there is a function $v:(N,N^{\prime
}]\rightarrow(0,\infty)$, $N^{\prime}>N$, such that

\begin{enumerate}
\item For all $w\in G$, $w\oplus v$ is a GLF on $(0,N^{\prime}]$,

\item $v(x)\leq\varepsilon$, $x\in(N,N^{\prime}]$,

\item $\int_{N}^{N^{\prime}}v\geq K$.
\end{enumerate}
\end{lemma}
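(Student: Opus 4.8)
The plan is to obtain Lemma~\ref{A3} by iterating the preceding (unnumbered) lemma a finite number of times, in exactly the same spirit as Lemma~\ref{A2} was obtained from Lemma~\ref{A1}. The point of the preceding lemma is that on each application one can extend all the GLF's in the current finite family by a function bounded by $\varepsilon$ whose integral is at least half the minimum of the current total weights $\int_{0}^{N}w$. The key observation is that after one such extension the new minimum total weight is at least $K_{0}+\tfrac{1}{2}K_{0}=\tfrac{3}{2}K_{0}$ (where $K_{0}=\min_{w\in G}\int_{0}^{N}w>0$ since each GLF is strictly positive and $N\geq 2$), and more generally after $j$ extensions it is at least $(3/2)^{j}K_{0}$; since $K_{0}>0$ this grows without bound.

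So, concretely, I would argue as follows. Fix $K<\infty$ and $\varepsilon>0$. Choose $m$ so large that $(3/2)^{m}K_{0}\geq 2K$, where $K_{0}=\min_{w\in G}\int_{0}^{N}w$. Set $N_{0}=N$ and $G_{0}=G$. Having defined $N_{j-1}\geq N$ and a finite family $G_{j-1}$ of GLF's on $(0,N_{j-1}]$ which restricts to $G$ on $(0,N]$, apply the preceding lemma with the same $\varepsilon$ to obtain $N_{j}>N_{j-1}$ and $v_{j}:(N_{j-1},N_{j}]\to(0,\infty)$ with $v_{j}\leq\varepsilon$, $\int_{N_{j-1}}^{N_{j}}v_{j}\geq\tfrac12\min_{w\in G_{j-1}}\int_{0}^{N_{j-1}}w$, and $w\oplus v_{j}$ a GLF on $(0,N_{j}]$ for every $w\in G_{j-1}$; let $G_{j}=\{\,w\oplus v_{j}:w\in G_{j-1}\,\}$. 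A trivial induction shows $\min_{w\in G_{j}}\int_{0}^{N_{j}}w\geq(3/2)^{j}K_{0}$. After $m$ steps put $N^{\prime}=N_{m}$ and $v=v_{1}\oplus v_{2}\oplus\cdots\oplus v_{m}$ (the set-theoretic union, which is well defined since the intervals $(N_{j-1},N_{j}]$ are pairwise disjoint and cover $(N,N^{\prime}]$). Then for each $w\in G$, $w\oplus v=(\cdots((w\oplus v_{1})\oplus v_{2})\cdots)\oplus v_{m}\in G_{m}$ is a GLF on $(0,N^{\prime}]$, giving (1); clearly $v(x)\leq\varepsilon$ on $(N,N^{\prime}]$, giving (2); and $\int_{N}^{N^{\prime}}v=\sum_{j=1}^{m}\int_{N_{j-1}}^{N_{j}}v_{j}\geq\tfrac12\min_{w\in G_{m-1}}\int_{0}^{N_{m-1}}w\geq\tfrac12(3/2)^{m-1}K_{0}$; enlarging $m$ if necessary so that $\tfrac12(3/2)^{m-1}K_{0}\geq K$ (equivalently $(3/2)^{m}K_{0}\geq 3K$) yields (3).

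The only mild subtlety — and the step I expect to require the most care — is bookkeeping the total-weight lower bound across the iteration: one must note that $G_{j-1}$ consists of GLF's on $(0,N_{j-1}]$ that all agree with $G$ on $(0,N]$, so that the minimum total weight is monotone and the geometric growth estimate is legitimate, and one must make sure the single $\varepsilon$ bound is preserved at every stage (it is, since each $v_{j}\leq\varepsilon$). Everything else is a direct finite iteration of an already-established lemma, parallel to the passage from Lemma~\ref{A1} to Lemma~\ref{A2}, and involves no new estimates.
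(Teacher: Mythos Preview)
Your proposal is correct and is exactly the approach the paper intends: the paper's entire ``proof'' is the phrase ``We may repeat the preceding lemma to obtain,'' and you have spelled out precisely that iteration, including the geometric growth $K_{j}\geq(3/2)^{j}K_{0}$ of the minimum total weight. The only cosmetic blemish is that you choose $m$ twice with slightly different thresholds; just pick $m$ once at the outset so that $\tfrac12(3/2)^{m-1}K_{0}\geq K$ (noting $K_{0}\geq 2$ since each GLF equals $1$ on $(0,2]$).
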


\begin{proposition}
\label{A4}There exists an infinite sequence $\left(  w_{p}\right)  ^{\infty
}_{p=1} $ of GLF's on $\left(  0,\infty\right)  $ such that for every nonempty
$M\subseteq\mathbb{N}$ and every $p^{\prime}\notin M,$

\begin{enumerate}
\item $w_{M}=\sup_{p\in M}w_{p}$ is a GLF on $\left(  0,\infty\right)  $,

\item
\[
\sup_{n}\frac{\int_{0}^{n}w_{p^{\prime}}}{\int_{0}^{n}w_{M}}=\infty.
\]

\end{enumerate}
\end{proposition}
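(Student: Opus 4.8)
The plan is to construct the GLF's $w_p$ on disjoint ``blocks'' of $(0,\infty)$ by a back-and-forth interleaving, using Lemmas \ref{A2} and \ref{A3} alternately. First I would fix a partition of $(2,\infty)$ into consecutive intervals $I_1, I_2, \dots$ (with endpoints $N_0 = 2 < N_1 < N_2 < \cdots$) together with a surjection $\sigma: \mathbb{N} \to \mathbb{N} \times \{a, b\}$ that hits every pair $(p, \ast)$ infinitely often. At stage $k$, having already defined each $w_p$ on $(0, N_{k-1}]$ so that every finite truncation is a GLF there, I consult $\sigma(k) = (p, \ast)$ and do the following on $I_k = (N_{k-1}, N_k]$. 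If $\ast = a$ (a ``small'' step), I set $w_p$ on $I_k$ to be a new, very small, strictly decreasing piece, and I extend every other $w_q$, $q \neq p$, across $I_k$ using Lemma \ref{A2} with $\varepsilon$ chosen small enough that $\int_{N_{k-1}}^{N_k} v < 2^{-k}$; crucially, all the $w_q$ for $q \neq p$ get the \emph{same} extension $v$, so they remain equal on $I_k$ up to the contribution of $w_p$. If $\ast = b$ (a ``big'' step for $p$), I instead extend \emph{all} of the $w_q$, $q \neq p$, by a common small piece (Lemma \ref{A2} again, with weight $< 2^{-k}$ on $I_k$), while on $w_p$ alone I put a piece of large total mass using Lemma \ref{A3}, so that $\int_{N_{k-1}}^{N_k} w_p \geq k$. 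In either case one also arranges the pieces to be non-increasing and to fit together continuously enough that conditions (1)--(2) of a GLF on $(0, N_k]$ persist; this is exactly what the cited lemmas deliver, since at each stage only finitely many GLF's are involved.

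Next I would check that each $w_p$ so constructed is a GLF on all of $(0,\infty)$: conditions (1) and (2) are built in, and $\int_0^\infty w_p = \infty$ because the ``big'' step for $p$ occurs infinitely often, contributing unbounded mass; $\lim_{x \to \infty} w_p(x) = 0$ because at every stage the newly added piece has height at most $\varepsilon_k \to 0$ and the function is non-increasing. Similarly, for a nonempty $M \subseteq \mathbb{N}$, the function $w_M = \sup_{p \in M} w_p$ is non-increasing, equals $1$ on $(0,2]$, tends to $0$, and has infinite integral; the submultiplicativity inequality (3) for $w_M$ should follow from the fact that on each block $I_k$ the sup $w_M$ agrees with one particular $w_p$ (namely any $p \in M$ if $\sigma(k)$ is not a big step for some member-or-nonmember in the relevant way — more precisely, $w_M|_{I_k}$ equals $w_q|_{I_k}$ for every $q \in M$ except possibly on one distinguished $I_k$ per ``big'' index, where it equals the large piece), so $\int_0^x w_M$ is squeezed between $\int_0^x w_q$ and a bounded perturbation thereof, and the GLF inequality transfers. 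The cleanest way to see (1) is probably to observe directly that $w_M = w_{p_0} \oplus \big(\bigoplus_k (\text{piece on } I_k)\big)$ where on each $I_k$ the piece is itself the sup of finitely many GLF-extensions, hence still yields a GLF by Lemma \ref{A1}'s reasoning applied to the finite set $\{w_q : q \in M, q \text{ relevant at stage } k\}$.

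Finally, for (2): given $p' \notin M$ and any member $p \in M$, look at the stages $k$ where $\sigma(k) = (p, b)$ is a big step for $p$. At such a stage, $\int_{N_{k-1}}^{N_k} w_p \geq k$, whereas $w_{p'}$ was extended over $I_k$ by a piece of mass $< 2^{-k}$ (since $p' \neq p$ it falls in the ``all others get a small common piece'' category). Hence $\int_0^{N_k} w_M \geq \int_0^{N_k} w_p$ grows at least like $k$ along this subsequence, while $\int_0^{N_{k-1}} w_{p'}$ has only grown by a bounded amount since the previous big-step-for-$p$ stage; taking the ratio $\int_0^{N_k} w_{p'} / \int_0^{N_k} w_M$ — wait, we need the ratio the other way: we want $\int_0^n w_{p'} / \int_0^n w_M$ to be unbounded, so instead I would look at the big steps for $p'$ itself: at stage $k$ with $\sigma(k) = (p', b)$, $\int_{N_{k-1}}^{N_k} w_{p'} \geq k$ while every $w_p$ with $p \in M$ (so $p \neq p'$) gets mass $< 2^{-k}$ on $I_k$, hence $\int_0^{N_k} w_M = \max_{p \in M}\int_0^{N_k} w_p$ has grown by less than $2^{-k} \cdot |M|$ — no, $M$ may be infinite. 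The fix is that on $I_k$ during a big step for $p'$, I give \emph{every} $w_q$ with $q \neq p'$ the same common small extension $v_k$ with $\int_{I_k} v_k < 2^{-k}$, so $\int_0^{N_k} w_q - \int_0^{N_{k-1}} w_q < 2^{-k}$ uniformly in $q \neq p'$; therefore $\int_0^{N_k} w_M \leq \int_0^{N_{k-1}} w_M + 2^{-k}$, which stays bounded on the subsequence of such $k$ between consecutive big-steps-for-members-of-$M$ — but members of $M$ also have big steps. So I must be more careful: I would actually track, for each big-step-for-$p'$ stage $k$, that $\int_0^{N_k} w_{p'} \geq k$ while $\int_0^{N_k} w_M = \int_0^{N_k} w_p$ for the $p \in M$ maximizing the integral, and that particular $\int_0^{N_k} w_p$ is at most $\int_0^{N_{k}} w_{p'}$ times... this is the delicate point.

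The honest statement of the plan's main obstacle: the hard part will be controlling $w_M$ from \emph{above} so that a big step for $p'$ genuinely makes the ratio $\int_0^n w_{p'} / \int_0^n w_M$ large, given that $M$ may be infinite and its members also receive arbitrarily large mass at their own big steps. I expect the resolution to be a careful bookkeeping of the interleaving order, exactly mirroring \cite[Proposition 2.6]{DOS}: one arranges (via the choice of $\sigma$ and the sizes $N_k$) that the big step for $p'$ at stage $k$ is immediately preceded by a long run of stages during which \emph{no} member of $M$ has had a big step, so that $\int_0^{N_{k-1}} w_M$ is still small relative to $k$, while $\int_0^{N_k} w_{p'} \geq k$. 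Once that combinatorial scheduling is in place — which is where the ``tweaking of \cite[\S 2]{DOS}'' promised in the text does its work — the inequality $\sup_n \int_0^n w_{p'} / \int_0^n w_M = \infty$ is immediate, and conversely the reverse ratio stays bounded precisely because every $w_q$, $q \neq p'$, gets a summably-small common extension at every big step for $p'$.
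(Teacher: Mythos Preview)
Your block-by-block construction with one designated index getting a ``high'' extension and all others getting a common ``low'' extension is exactly the right architecture, and your observation that on each block the set $\{w_p^k : p\in\mathbb{N}\}$ consists of just two functions (so $w_M|_{(0,N_k]}$ always lies in the finite set $G_k$ and is therefore a GLF) is the correct reason condition~(1) holds. The case $\ast=a$ in your scheme is superfluous: the paper uses only ``big'' steps, one per stage.

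The genuine gap is in your argument for~(2). You propose to make the ratio $\int_0^{N_k}w_{p'}/\int_0^{N_k}w_M$ large by scheduling a ``long run of stages during which no member of $M$ has had a big step'' just before the big step for $p'$. This cannot work, because $M$ is not known at construction time: the sequence $(w_p)$ must be built once and for all, and the conclusion must hold for \emph{every} $M\not\ni p'$ simultaneously. In particular, for $M=\mathbb{N}\setminus\{p'\}$, every stage that is not a big step for $p'$ is a big step for some member of $M$, so no such quiet run exists. Consequently $\int_0^{N_{k-1}}w_M$ may already be enormous compared with your lower bound $\int_{N_{k-1}}^{N_k}w_{p'}\geq k$, and the ratio need not blow up.

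The paper's remedy is not scheduling but \emph{relative scaling}: at stage $i$ (indexed by an enumeration $((p_i,q_i))$ of pairs $p<q$), the high piece for $w_{p_i}$ is chosen with mass at least $q_iK_{i-1}$, where $K_{i-1}=\int_0^{N_{i-1}}\max G_{i-1}$ is the largest possible accumulated mass so far. Then at any stage with $p_i=p'\notin M$ one has $\int_0^{N_i}w_M\leq K_{i-1}+1$ (since every $w_p$, $p\in M$, received the low extension of mass $\leq 1$) while $\int_0^{N_i}w_{p'}\geq q_iK_{i-1}$; the ratio is therefore at least $q_iK_{i-1}/(K_{i-1}+1)$, and $q_i$ ranges over all integers $>p'$. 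This single idea---make the big step dwarf the current maximum, rather than be absolutely large---is what your plan is missing.
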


\begin{proof}
The desired family of incomparable GLF's is constructed by defining its
elements inductively on successive intervals. On each of the segments, each of
the $w_{p}$'s is chosen to be either ``high" or ``low".

Let $((p_{i},q_{i}))_{i=1}^{\infty}$ be an enumeration of $\{(p,q):p<q,p,q\in
{\mathbb{N}}\}$ and fix a positive sequence $(\varepsilon_{i})$ decreasing to
$0$. For all $p\in{\mathbb{N}}$, define $w_{p}^{0}:(0,2]\rightarrow(0,\infty)$
by $w_{p}^{0}(x)=1$. Set $G_{0}=\{w_{p}^{0}: p \in{\mathbb{N}}\}$.

Assume that for some $i \in{\mathbb{N}}$, functions $w_{p}^{j}:(N_{j-1}%
,N_{j}]\rightarrow(0,\infty)$, $0\leq j<i$ ($N_{-1}=0$, $N_{0}=2$),
$p\in{\mathbb{N}}$, have been defined so that $G_{i-1}=\{w_{r_{0}}^{0}%
\oplus\cdots\oplus w_{r_{i-1}}^{i-1}:r_{0},\dots,r_{i-1}\in{\mathbb{N}}\}$ is
a finite set of GLF's on $(0,N_{i-1}]$ and that $\{w_{p}^{j}:p\in{\mathbb{N}%
}\}$ is a totally ordered set of functions (in the pointwise order) for each
$j \in[0,i)$. Set $K_{i-1}=\int_{0}^{N_{i-1}}\max G_{i-1}$, where by $\max
G_{i-1}$ we mean the pointwise maximum of the set of functions $G_{i-1}$. By
Lemma \ref{A3}, choose a function $w_{p_{i}}^{i}$ on $(N_{i-1},N_{i}]$,
$N_{i}>N_{i-1}$, such that $w\oplus w_{p_{i}}^{i}$ is a GLF on $(0,N_{i}]$ for
all $w\in G_{i-1}$, that $w_{p_{i}}^{i}(x)\leq\varepsilon_{i}$ for all
$x\in(N_{i-1},N_{i}]$ and that $\int_{N_{i-1}}^{N_{i}}w_{p_{i}}^{i}\geq
q_{i}K_{i-1}$. On the other hand, by Lemma \ref{A2}, there exists $v$ on
$(N_{i-1},N_{i}]$ such that $w\oplus v$ is a GLF on $(0,N_{i}]$ for all $w\in
G_{i-1}$, that $v(x)\leq w_{p_{i}}^{i}(N_{i})$ for all $x\in(N_{i-1},N_{i}]$
and that $\int_{N_{i-1}}^{N_{i}}v\leq1$. Define $w_{p}^{i}=v$ for all $p\neq
p_{i}$. Note that $G_{i}=\{w\oplus w_{p}^{i}:w\in G_{i-1},p\in{\mathbb{N}}\}$
is a finite set of GLF's on $(0,N_{i}]$. Obviously, the set $\{w_{p}^{i}%
:p\in{\mathbb{N}}\} = \{w^{i}_{p_{i}}, v\}$ is totally ordered. This completes
the inductive construction. Define $w_{p}=\oplus_{i}w_{p}^{i}$, $p\in
{\mathbb{N}}$. Observe that $K_{0}=2$ and $K_{i}\geq K_{i-1}+q_{i}K_{i-1}%
\geq3K_{i-1}$. Hence $K_{i} \to\infty$. Thus
\[
N_{i} \geq N_{i}-N_{i-1}\geq\int_{N_{i-1}}^{N_{i}}w_{p_{i}}^{i}\geq
q_{i}K_{i-1}\rightarrow\infty.
\]
Hence $w_{p}$ is defined on $(0,\infty)$ for all $p\in\mathbb{N}$. If
$\emptyset\neq M\subseteq{\mathbb{N}}$, let $w_{M}=\sup_{p\in M}w_{p}$. We
claim that $w_{M}$ is a GLF on $(0,\infty)$. By definition, ${w_{M}%
}_{|(0,N_{i}]}\in G_{i}$ for all $i\in{\mathbb{N}}$. Thus $w_{M}$ is a GLF on
$(0,N_{i}]$ for all $i\in{\mathbb{N}}$. Also note that $w_{M}\left(  x\right)
\leq\varepsilon_{i}$ for all $x\in(N_{i-1},N_{i}].$ Therefore, $\lim
_{x\rightarrow\infty}w_{M}\left(  x\right)  =0.$ Furthermore, since $w_{M} =
w^{i}_{p_{i}}$ on $(N_{i-1},N_{i}]$ if $p_{i} \in M$,
\[
\int_{0}^{\infty} w_{M}> \sup_{\{i: p_{i} \in M\}} \int_{N_{i-1}}^{N_{i}}%
w^{i}_{p_{i}}\geq\sup_{\{i: p_{i} \in M\}}q_{i}K_{i-1}.
\]
Because of the enumeration, $p_{i} \in M$ holds for infinitely many $i$. It
follows that $\int_{0}^{\infty} w_{M} = \infty$. This shows that $w_{M}$ is a
GLF on $(0,\infty)$.

Finally, note that for all $i$ such that $p_{i}\notin M$, $\int_{N_{i-1}%
}^{N_{i}}w_{M}\leq1$ by construction. In particular, if $p^{\prime}\notin M$,
then for all $i$ such that $p_{i}=p^{\prime},$%
\begin{align*}
\int_{0}^{N_{i}}w_{M}  &  \leq\int_{0}^{N_{i-1}}w_{M}+\int_{N_{i-1}}^{N_{i}%
}w_{M}\\
&  \leq\int_{0}^{N_{i-1}}\max G_{i-1}+\max_{p\in M}\int_{N_{i-1}}^{N_{i}}w_{p}
\leq K_{i-1}+1.
\end{align*}
On the other hand, for all such $i$,
\[
\int_{0}^{N_{i}}w_{p^{\prime}}\geq\int_{N_{i-1}}^{N_{i}}w_{p^{\prime}}%
=\int_{N_{i-1}}^{N_{i}}w_{p_{i}}^{i}\geq q_{i}K_{i-1}.
\]
Hence%
\[
\sup_{n}\frac{\int_{0}^{n}w_{p^{\prime}}}{\int_{0}^{n}w_{M}}=\infty.
\]

\end{proof}

Given a Lorentz sequence $(w(n))^{\infty}_{n=1}$ and $1 \leq p < \infty$, the
Lorentz sequence space $d(w,p)$ consists of all real sequences $(a_{n})$ such
that $\sum a^{*}_{n}w_{n} < \infty$, where $(a^{*}_{n})$ denotes the
non-increasing rearrangement of $(|a_{n}|)$.

\begin{corollary}
\label{A5}Let $\left(  w_{p}\right)  _{p=1}^{\infty}$ be as above. For every
$M\subseteq\mathbb{N}$, and $p\notin M,$ the unit vector basis of $d\left(
w_{M},1\right)  $ does not dominate that of $d\left(  w_{p},1\right)  $.
\end{corollary}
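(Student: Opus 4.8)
The plan is to deduce Corollary~\ref{A5} directly from Proposition~\ref{A4} by translating the failure of domination between unit vector bases of Lorentz sequence spaces into the statement about the ratios $\int_0^n w_{p'}/\int_0^n w_M$. First I would recall the standard formula for the norm of a block of basis vectors in $d(w,1)$: if $(e_n)$ is the unit vector basis of $d(w,1)$ and $S(n) = \sum_{k=1}^n w(k)$, then for any finite set $F$ of cardinality $n$, $\|\sum_{i \in F} e_i\| = S(n)$. More generally, the unit vector basis of $d(w,1)$ is $1$-symmetric, so testing domination on the vectors $\sum_{i=1}^n e_i$ is enough to detect a lower bound on the domination constant.

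Next I would set up the comparison. Suppose, for contradiction, that the unit vector basis $(f_i)$ of $d(w_M,1)$ dominates the unit vector basis $(g_i)$ of $d(w_p,1)$, so there is $C < \infty$ with $\|\sum a_i g_i\|_{d(w_p,1)} \le C\|\sum a_i f_i\|_{d(w_M,1)}$ for all scalars $(a_i)$. Applying this to $a_i = 1$ for $1 \le i \le n$ and $a_i = 0$ otherwise gives $\sum_{k=1}^n w_p(k) \le C \sum_{k=1}^n w_M(k)$ for every $n$, i.e.\ $S_{w_p}(n) \le C\, S_{w_M}(n)$. Thus $\sup_n S_{w_p}(n)/S_{w_M}(n) \le C < \infty$.

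The remaining step is to pass from the sums $S_w(n) = \sum_{k=1}^n w(k)$ to the integrals $\int_0^n w$ appearing in Proposition~\ref{A4}. Since each $w_p$ is a GLF, it is nonincreasing on $(0,\infty)$ with $w_p(x) = 1$ on $(0,2]$, so $w_p(k) \le \int_{k-1}^k w_p \le w_p(k-1)$ for $k \ge 2$; summing yields $\int_1^n w_p \le S_{w_p}(n-1) + w_p(1)$ and $S_{w_p}(n) \le w_p(1) + \int_1^n w_p$, and adding $\int_0^1 w_p = 1$ shows that $\int_0^n w_p$ and $S_{w_p}(n)$ differ by a bounded multiplicative factor (uniformly in $n$, using $w_p(1)=1$); the same holds for $w_M$, which is also a GLF by Proposition~\ref{A4}(1). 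Combining these two-sided estimates with $S_{w_p}(n) \le C\, S_{w_M}(n)$ would give $\sup_n \int_0^n w_p / \int_0^n w_M < \infty$, contradicting Proposition~\ref{A4}(2). The only mild obstacle is making the elementary sum-versus-integral comparison clean and uniform in $p$ and $M$; this is routine given that all the functions are nonincreasing and equal to $1$ near the origin, so I would state it as a short lemma or fold it into two lines of the proof.
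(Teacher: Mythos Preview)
Your proposal is correct and follows essentially the same approach as the paper: both use the norm formula $\|\sum_{i=1}^n e_i\|_{d(w,1)} = \sum_{k=1}^n w(k)$, the elementary sum-versus-integral comparison for nonincreasing functions, and Proposition~\ref{A4}(2). The paper argues directly (for any $K$ it exhibits $N$ with $\|\sum_{i=1}^N v_i\| \ge K\|\sum_{i=1}^N u_i\| - 1$) while you phrase it as a contradiction, but the content is the same.
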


\begin{proof}
Let $\left(  v_{i}\right)  $ and $\left(  u_{i}\right)  $ denote the
respective unit vector bases of $d\left(  w_{p},1\right)  $ and $d\left(
w_{M},1\right)  .$ According to Proposition \ref{A4}, for any $K < \infty$,
there exists $N\in\mathbb{N}$ such that $\int_{0}^{N+1}w_{p}\geq K\int
_{0}^{N+1}w_{M}.$ Then%
\begin{align*}
\bigl\Vert
{\textstyle\sum\limits_{i=1}^{N}}
v_{i}\bigr\Vert  &  =%
{\textstyle\sum\limits_{i=1}^{N}}
w_{p}( i) \geq\int_{1}^{N+1}w_{p} = \int^{N+1}_{0}w_{p} - 1\\
&  \geq K\int_{0}^{N+1}w_{M} -1 \geq K {\textstyle\sum\limits_{i=1}^{N}}
w_{M}\left(  i\right)  -1 = K\bigl\Vert
{\textstyle\sum\limits_{i=1}^{N}}
u_{i}\bigr\Vert - 1.
\end{align*}
The result follows since $K$ is arbitrary.
\end{proof}

\section{Countable Semilattices of Spreading Models}

In this section, we show that every countable semilattice without an infinite
increasing sequence is order isomorphic to some $SP_{w}\left(  X\right)  $. If
$\left(  x_{i}\right)  $ and $\left(  y_{i}\right)  $ are sequences in the
Banach spaces $X$ and $Y$ respectively, let $\left(  x_{i}\right)
\oplus\left(  y_{i}\right)  $ denote the sequence $\left(  z_{i}\right)
=\left(  x_{i},y_{i}\right)  $ in the direct sum $X\oplus Y.$ The $\ell^{p}%
$-sum of an infinite sequence $(X_{j})$ of Banach spaces is denoted by
$(\sum_{j=1}^{\infty}\oplus X_{j})_{p}$. We omit the easy proof of the next lemma.

\begin{lemma}
\label{C2}Let $w_{1}=\left(  w_{1}\left(  n\right)  \right)  $ and
$w_{2}=\left(  w_{2}\left(  n\right)  \right)  $ be Lorentz sequences. Then
$w=w_{1}\vee w_{2}=\left(  w_{1}\left(  n\right)  \vee w_{2}\left(  n\right)
\right)  $ is a Lorentz sequence. Moreover, if $(u_{n}^{1})$ and $(u_{n}^{2})$
are the respective unit vector bases of $d(w_{1},1)$ and $d(w_{2},1)$, then
$(u_{n}^{1})\oplus(u_{n}^{2})$ is equivalent to $(u_{n})$, the unit vector
basis of $d(w,1)$.
\end{lemma}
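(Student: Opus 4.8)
The plan is to check both assertions directly against the definitions, since each reduces to a one-line computation.

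For the first assertion, note that the pointwise maximum of two non-increasing sequences of positive numbers is again non-increasing and positive, and that $w(1)=w_{1}(1)\vee w_{2}(1)=1$. Since $w_{1}(n)\to 0$ and $w_{2}(n)\to 0$, we get $w(n)=\max\{w_{1}(n),w_{2}(n)\}\to 0$; and since $w(n)\geq w_{1}(n)$ for every $n$, we have $\sum_{n}w(n)\geq\sum_{n}w_{1}(n)=\infty$. Hence $w$ is a Lorentz sequence.

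For the second assertion, I would fix a finitely supported scalar sequence $(a_{i})$ and let $(a_{i}^{*})$ be the non-increasing rearrangement of $(|a_{i}|)$, noting that this rearrangement depends only on $(a_{i})$ and not on the ambient space. By the description of the Lorentz norm recalled above,
\[
\Bigl\Vert \sum_{i} a_{i} u_{i}^{1} \Bigr\Vert + \Bigl\Vert \sum_{i} a_{i} u_{i}^{2} \Bigr\Vert = \sum_{i} a_{i}^{*} w_{1}(i) + \sum_{i} a_{i}^{*} w_{2}(i) = \sum_{i} a_{i}^{*}\bigl(w_{1}(i)+w_{2}(i)\bigr),
\]
while $\Vert \sum_{i} a_{i} u_{i}\Vert = \sum_{i} a_{i}^{*} w(i)$. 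The elementary inequality $\max\{s,t\}\leq s+t\leq 2\max\{s,t\}$ for $s,t\geq 0$ gives $w(i)\leq w_{1}(i)+w_{2}(i)\leq 2w(i)$ for each $i$, whence
\[
\Bigl\Vert \sum_{i} a_{i} u_{i} \Bigr\Vert \leq \Bigl\Vert \sum_{i} a_{i} u_{i}^{1} \Bigr\Vert + \Bigl\Vert \sum_{i} a_{i} u_{i}^{2} \Bigr\Vert \leq 2\Bigl\Vert \sum_{i} a_{i} u_{i} \Bigr\Vert .
\]
Since any two of the usual norms on the two-fold direct sum $d(w_{1},1)\oplus d(w_{2},1)$ are equivalent with constant at most $2$, it follows that $(u_{i}^{1})\oplus(u_{i}^{2})$ is equivalent to $(u_{i})$, with equivalence constants independent of $w_{1}$ and $w_{2}$.

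There is essentially no obstacle here: the only matters requiring (minimal) care are that the non-increasing rearrangement of the coefficient vector is common to both summands, and that the specific choice of norm on the finite direct sum is immaterial up to the factor $2$ already absorbed above. This is why the proof is routine and is omitted in the text.
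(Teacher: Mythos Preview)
Your proof is correct and is precisely the routine verification the paper has in mind; the authors explicitly omit the proof of this lemma as easy. There is nothing to compare, since the paper gives no argument of its own.
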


\begin{lemma}
[{\cite[Lemma 3.6]{DOS}}]\label{C3A}Let $X=( \sum_{j=1}^{\infty}\oplus
X_{j})_{p}$, where $1\leq p<\infty$ and each $X_{j}$ is an
infinite-dimensional Banach space, and let $(\tilde{x}_{i})$ be a spreading
model generated by a normalized weakly null sequence in $X$. Then there exist
non-negative $(c_{j})_{j=0}^{\infty}$ with $\sum_{j=0}^{\infty}c_{j}^{p}=1$
and normalized spreading models $(\tilde{x}_{i}^{j})_{i}$ in $X_{j}$ generated
by weakly null sequences such that for all scalars $(a_{i}),$
\begin{equation}
\Vert\sum_{i}a_{i}\tilde{x}_{i}\Vert=\bigl[\, \sum_{j=1}^{\infty}\,c_{j}%
^{p}\Vert\sum_{i}a_{i}\tilde{x}_{i}^{j}\Vert^{p}+c_{0}^{p}\sum_{i}|a_{i}%
|^{p}\bigr] ^{1/p}. \label{c0}%
\end{equation}

\end{lemma}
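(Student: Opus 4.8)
The plan is to realize $(\tilde x_i)$ by a normalized weakly null sequence $(y_n)$ in $X=(\sum_j\oplus X_j)_p$, decompose $y_n=(y_n^j)_{j\geq 1}$ along the $\ell^p$-sum, extract the constants $(c_j)$ and the component spreading models $(\tilde x_i^j)$ by diagonal arguments, and then---this is the substantive point---pass to a further gliding-hump subsequence forcing the ``escaping'' parts of the $y_n$'s to be disjointly supported in the $j$-direction, so that they contribute an $\ell^p$-sum rather than an $\ell^1$-sum. Write $d_n^j=\|y_n^j\|_{X_j}$, so $\sum_{j\geq 1}(d_n^j)^p=1$. Since each coordinate projection $X\to X_j$ is contractive, $(y_n^j)_n$ is weakly null in $X_j$ for every $j$. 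First, by a diagonal argument pass to a subsequence with $d_n^j\to c_j$ for all $j$; by Fatou $\sum_{j\geq 1}c_j^p\leq 1$, and set $c_0=(1-\sum_{j\geq 1}c_j^p)^{1/p}$. For $j$ with $c_j>0$ the sequence $(y_n^j/d_n^j)_n$ is normalized and weakly null in $X_j$ (the normalizing scalars tend to $1/c_j$), so a further diagonal refinement makes each of these generate a normalized spreading model $(\tilde x_i^j)$ in $X_j$; for $j$ with $c_j=0$ let $(\tilde x_i^j)$ be any normalized spreading model in $X_j$ generated by a weakly null sequence (this term will not enter the formula). All spreading models already produced survive passage to subsequences.

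Next, fix $\eta_n\searrow 0$ and, by one more diagonal construction, pass to a further subsequence---still written $(y_n)$---and choose integers $0=J_0<J_1<J_2<\cdots$ with
\[
\text{(i)}\ \ \|Q_{J_n}y_n\|<\eta_n \qquad\text{and}\qquad \text{(ii)}\ \ \sum_{j\leq J_{n-1}}|d_n^j-c_j|<\eta_n
\]
for every $n$, where $P_J$ (resp.\ $Q_J$) is the coordinate projection of $X$ onto the coordinates $\leq J$ (resp.\ $>J$). Condition (i) is possible since $y_n\in(\sum\oplus X_j)_p$, and (ii) since only finitely many $j$ satisfy $j\leq J_{n-1}$ and $d_m^j\to c_j$. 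Thus $y_n$ is $\eta_n$-close to being supported on the first $J_n$ coordinates, is ``stabilized'' on the first $J_{n-1}$ coordinates, and carries its escaping mass essentially on the block $(J_{n-1},J_n]$.

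Then, to verify the formula, fix $k$ and scalars $(a_l)_{l=1}^k$ (we may assume $\max_l|a_l|\leq 1$): it suffices to show $\|\sum_{l=1}^k a_l y_{n_l}\|^p\to\sum_{j\geq 1}c_j^p\|\sum_l a_l\tilde x_l^j\|^p+c_0^p\sum_l|a_l|^p$ as $k\leq n_1<\cdots<n_k$ with $n_1\to\infty$, since the left side also tends to $\|\sum_l a_l\tilde x_l\|^p$. Put $J=J_{n_1-1}$ and split $\|\sum_l a_l y_{n_l}\|^p=\|P_J\sum_l a_l y_{n_l}\|^p+\|Q_J\sum_l a_l y_{n_l}\|^p$. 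For the first term, $\|P_J\sum_l a_l y_{n_l}\|^p=\sum_{j\leq J}\|\sum_l a_l y_{n_l}^j\|^p$; for each fixed $j$, since $j\leq J\leq J_{n_l-1}$ for every $l$, (ii) gives $d_{n_l}^j\to c_j$, hence (via the spreading model of $(y_n^j/d_n^j)$ when $c_j>0$, and $\|y_{n_l}^j\|\to 0$ when $c_j=0$) $\|\sum_l a_l y_{n_l}^j\|^p\to c_j^p\|\sum_l a_l\tilde x_l^j\|^p$; summing and controlling the tail over $j$ by $\sum_{j>J'}c_j^p$ (comparing $\sum_{j>J'}(d_{n_l}^j)^p$ with $\sum_{j>J'}c_j^p$ via (ii)), the first term tends to $\sum_{j\geq 1}c_j^p\|\sum_l a_l\tilde x_l^j\|^p$. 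For the second term, decompose $Q_J y_{n_l}=g_{n_l}+e_{n_l}+h_{n_l}$ into the parts supported on $(J_{n_1-1},J_{n_l-1}]$, on $(J_{n_l-1},J_{n_l}]$, and on $(J_{n_l},\infty)$: by (i), $\|h_{n_l}\|<\eta_{n_l}\to 0$; by (ii), $\|g_{n_l}\|^p\leq\sum_{j>J_{n_1-1}}c_j^p+p\eta_{n_l}\to 0$; and $\|e_{n_l}\|^p=1-\|P_{J_{n_l-1}}y_{n_l}\|^p-\|h_{n_l}\|^p\to c_0^p$ (using $\|P_{J_{n_l-1}}y_{n_l}\|^p\to 1-c_0^p$ from (ii)), all as $n_1\to\infty$. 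Crucially, since $n_1<\cdots<n_k$ and $(J_m)$ is increasing, the blocks $(J_{n_l-1},J_{n_l}]$ are pairwise disjoint, so the $e_{n_l}$ have pairwise disjoint supports in the $\ell^p$-sum and $\|\sum_l a_l e_{n_l}\|^p=\sum_l|a_l|^p\|e_{n_l}\|^p\to c_0^p\sum_l|a_l|^p$; dropping the uniformly norm-negligible $g_{n_l}$ and $h_{n_l}$, the second term tends to $c_0^p\sum_l|a_l|^p$. Adding the two limits gives the identity.

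The main difficulty lies in the tail term. The naive bound $\|\sum_l a_l Q_J y_{n_l}\|\leq\sum_l|a_l|\,\|Q_J y_{n_l}\|\to c_0\sum_l|a_l|$ over-counts and produces $(\sum_l|a_l|)^p$ in place of the required $\sum_l|a_l|^p$; recovering the exact $\ell^p$-contribution forces one to know that the escaping parts of distinct $y_{n_l}$ are essentially disjointly supported, which is precisely what the gliding-hump choice of the $J_m$'s secures, together with the bookkeeping (through (ii)) that the stabilized mass sitting on coordinates between $J_{n_1-1}$ and $J_{n_l-1}$ is negligible---so that it is neither lost from the bulk term nor counted in the tail. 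The remaining verifications are routine perturbations of norms and coefficients plus the defining property of spreading models.
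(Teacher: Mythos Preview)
Your argument is correct and follows the same gliding-hump scheme as the proof in \cite{DOS} that the paper cites without reproducing: the Remark following the lemma records that in that proof $c_0=\lim\|x_i-P_i(x_i)\|$ with $P_i$ the projection onto the first $i$ coordinates, which is precisely the mechanism you implement via the blocks $(J_{n_l-1},J_{n_l}]$ to capture the escaping mass as a disjointly supported $\ell^p$-contribution. The paper itself gives no independent proof of this lemma, so beyond this structural match there is nothing further to compare against.
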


\noindent\textbf{Remark}. If $p=1,$ the final term on the right of equation
(\ref{c0}) may be omitted, i.e., $c_{0} = 0$. In fact, according to the proof
of Lemma \ref{C3A} in \cite[Lemma 3.6]{DOS}, the spreading model $\left(
\tilde{x}_{i}\right)  $ is generated by a weakly null sequence $\left(
x_{i}\right)  $ in $X$ in such a way that $c_{0}=\lim\left\Vert x_{i}%
-P_{i}\left(  x_{i}\right)  \right\Vert ,$ where $P_{i}\left(  x_{i}\right)
=\left(  x_{i}^{1},x_{i}^{2},\cdots,x_{i}^{i},0,0,0,\cdots\right)  .$ However,
since $\ell^{1}$ has the Schur property (weakly null sequences are norm null),
it is easy to see that $\lim\left\Vert x_{i}-P_{i}\left(  x_{i}\right)
\right\Vert = 0$ for any weakly null sequence $(x_{i})$ in $(\sum
_{j=1}^{\infty}\oplus X_{j})_{1}$.\newline

The following is the crucial property of Lorentz sequence spaces that we
require. It can be deduced from the arguments in \cite[\S 4]{ACL}:

\begin{theorem}
\cite{ACL} \label{ThmACL} Let $w=\left(  w\left(  n\right)  \right)  $ be a
$C$-submultiplicative Lorentz sequence and let $(u_{n})$ be the unit vector
basis of $d(w,1)$. For any $\varepsilon>0,$ every normalized block basis in
$d\left(  w,1\right)  $ has a subsequence $\left(  x_{n}\right)  $ such that either

\begin{enumerate}
\item[(a)] $\left(  x_{n}\right)  $ is equivalent to the unit vector basis of
$\ell^{1},$ or

\item[(b)] there exists $c>0$ such that for all $\left(  a_{n}\right)  \in
c_{00},$%
\begin{equation}
c\Vert\sum a_{n}u_{n}\Vert\leq\Vert\sum a_{n}x_{n}\Vert\leq( C+\varepsilon
)\Vert\sum a_{n}u_{n}\Vert. \label{ACLequivalence}%
\end{equation}

\end{enumerate}

In particular, if $\left(  \tilde{x}_{n}\right)  $ is a spreading model
generated by a normalized weakly null sequence, then $\left(  \tilde{x}%
_{n}\right)  $ satisfies $($\ref{ACLequivalence}$)$ in place of $\left(
x_{n}\right)  .$
\end{theorem}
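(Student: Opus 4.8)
The plan is to reduce the whole statement to a dichotomy for normalized block bases of the unit vector basis $(u_n)$ of $d(w,1)$, which is essentially the content of \cite[\S4]{ACL} (the only refinement needed being to keep track of the submultiplicativity constant $C$), and then to obtain the ``in particular'' clause from it by a soft perturbation argument. Suppose first that $(\tilde x_n)$ is the spreading model of a normalized weakly null sequence $(x_n)$ in $d(w,1)$. Since the coordinate functionals of the Schauder basis are continuous, $(x_n)$ is coordinatewise null, so a Bessaga--Pe\l czy\'nski gliding-hump argument produces, for any $\delta>0$, a subsequence of $(x_n)$ that is $(1+\delta)$-equivalent to a normalized block basis $(y_n)$ of $(u_n)$. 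Passing to a subsequence leaves the spreading model unchanged, and the two-sided estimate in (b) passes to spreading models (the middle term is squeezed in the limit, uniformly over tails); hence it suffices to show that $(y_n)$ has a subsequence satisfying (a) or (b), choosing $\delta$ and the $\varepsilon$ used for $(y_n)$ small enough that the final upper constant is at most the prescribed $C+\varepsilon$. Moreover (a) cannot occur for $(y_n)$: a subsequence of $(y_n)$ realizing (a) would be $(1+\delta)$-equivalent to a subsequence of $(x_n)$, hence equivalent to the $\ell^1$-basis; but a normalized weakly null sequence is never equivalent to the $\ell^1$-basis, since on its closed span the weak topology of $d(w,1)$ restricts to the space's own weak topology (Hahn--Banach), on which the $\ell^1$-basis is not null by the Schur property. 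So (b) holds for $(\tilde x_n)$.

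For the block-basis dichotomy itself, write $x_n=\sum_{k\in I_n}b^{(n)}_k e_k$; by $1$-unconditionality and rearrangement-invariance of the norm I may take each $(b^{(n)}_k)_{k\in I_n}$ nonnegative and nonincreasing, say $b^{(n)}_1\ge\cdots\ge b^{(n)}_{\ell_n}>0$ with $\sum_j b^{(n)}_j w(j)=1$. Since the supports are disjoint, $\bigl\|\sum_n a_n x_n\bigr\|$ depends only on the multiset $\{|a_n|b^{(n)}_j\}$; writing $S(m)=\sum_{j\le m}w(j)$ and $N_n(s)=\#\{j:b^{(n)}_j\ge s\}$, a layer-cake computation gives $\bigl\|\sum_n a_n x_n\bigr\|=\int_0^\infty S\bigl(\sum_n N_n(s/|a_n|)\bigr)\,ds$, while $\bigl\|\sum_n a_n u_n\bigr\|=\int_0^\infty S(\#\{n:|a_n|\ge s\})\,ds$. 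Subadditivity $S(m_1+m_2)\le S(m_1)+S(m_2)$ gives the bound $\bigl\|\sum a_n x_n\bigr\|\le\sum|a_n|$ for free, and $N_n(s)\ge1$ for $s\le b^{(n)}_1$ gives $\bigl\|\sum a_n x_n\bigr\|\ge\int_0^\infty S(\#\{n:|a_n|b^{(n)}_1\ge s\})\,ds$. Passing to a subsequence, either $\inf_n b^{(n)}_1=:\rho>0$ or $b^{(n)}_1\to0$. In the first case the lower estimate in (b) holds at once with $c=\rho$; the matching upper estimate $\le(C+\varepsilon)\bigl\|\sum a_n u_n\bigr\|$ is where $C$-submultiplicativity $S(mn)\le CS(m)S(n)$ enters, to control the contribution of the ``long'' blocks, and after discarding finitely many blocks it yields the constant $C+\varepsilon$ rather than merely a finite constant --- this is exactly the computation of \cite[\S4]{ACL}. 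The case $b^{(n)}_1\to0$ (which forces $\ell_n\to\infty$) is treated by the remaining arguments of \cite[\S4]{ACL}, via a case analysis on how fast the block lengths $\ell_n$ grow: sufficiently rapid growth yields a subsequence equivalent to the $\ell^1$-basis, and in the complementary regime one is again in case (b).

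The hard part is this last combinatorial core: controlling, for an \emph{arbitrary} scalar sequence $(a_n)$, how the coefficients $|a_n|b^{(n)}_j$ of distinct blocks interleave in the decreasing rearrangement, and extracting a single subsequence along which this behaviour is simultaneously compatible with the submultiplicativity estimate (needed for the $(C+\varepsilon)$-upper bound) and with the $\ell^1$-alternative. Everything else --- the reduction to block bases, the exclusion of (a) for weakly null sequences, and the transfer to the spreading model --- is routine.
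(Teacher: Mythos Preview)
The paper does not give its own proof of this theorem; it simply records that the result ``can be deduced from the arguments in \cite[\S4]{ACL}'' and moves on. Your proposal is in the same spirit---you defer the combinatorial core of the block-basis dichotomy to \cite[\S4]{ACL} and supply the surrounding soft reductions (Bessaga--Pe\l{}czy\'nski gliding hump to pass to block bases, exclusion of the $\ell^{1}$ alternative for weakly null sequences via Schur, and transfer of the two-sided estimate to the spreading model), all of which are standard and correct. So there is essentially nothing to compare: both you and the paper outsource the hard part to \cite{ACL}, and the framing you add is exactly what the paper leaves implicit.
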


\begin{theorem}
Given a countable semilattice $L$ with no infinite increasing sequence, there
is a Banach space $X_{L}$ such that $SP_{w}\left(  X_{L}\right)  $ is order
isomorphic to $L.$
\end{theorem}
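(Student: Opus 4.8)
The plan is to combine the semilattice representation theorem of \S 1 with the Lorentz space machinery of \S 2 and \S 3. By Theorem \ref{T1}, fix a countable set $V$ and an injective, join-preserving map $T:L\rightarrow 2^{V}\smallsetminus\{\emptyset\}$. Identify $V$ with (a subset of) $\mathbb{N}$ and, using Proposition \ref{A4}, attach to each $v\in V$ a GLF $w_{v}$ on $(0,\infty)$ so that $(w_{v})_{v\in V}$ has the incomparability property of Proposition \ref{A4}. For each $x\in L$, set $M_{x}=T(x)\subseteq V$ and $w_{x}:=w_{M_{x}}=\sup_{v\in M_{x}}w_{v}$, which is again a GLF by Proposition \ref{A4}(1). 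The key order-theoretic bookkeeping is that $x\leq y$ in $L$ iff $M_{x}\subseteq M_{y}$ (since $T$ preserves joins and is injective), so $w_{x}\leq w_{y}$ pointwise when $x\leq y$, and conversely if $x\not\leq y$ then $M_{x}\not\subseteq M_{y}$, so there is $v\in M_{x}\smallsetminus M_{y}$, i.e. $v\notin M_{y}$ but $w_{v}\leq w_{x}$.

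Next I would build $X_{L}$. Let $X_{v}=d(w_{v},1)$ for $v\in V$ and put $X_{L}=(\sum_{v\in V}\oplus X_{v})_{1}$ (enumerating $V$ as $\mathbb{N}$). The claim is that $SP_{w}(X_{L})$ is order isomorphic to $L$ via $x\mapsto [(u^{x}_{i})]$, where $(u^{x}_{i})$ is the unit vector basis of $d(w_{x},1)$. There are two directions. \emph{Every element of $L$ is realized.} Given $x\in L$, by Lemma \ref{C2} the unit vector basis of $d(w_{x},1)=d(\sup_{v\in M_{x}}w_{v},1)$ is equivalent to $\bigoplus_{v\in M_{x}}(u^{v}_{i})$; since $M_{x}$ may be infinite one needs the routine extension of Lemma \ref{C2} to $\ell^{1}$-sums of countably many Lorentz bases, which follows because the $\ell^{1}$-sum of the bases of $d(w_{v},1)$, $v\in M_{x}$, is isometric to the basis of $d(\sup_{v\in M_{x}}w_{v},1)$ directly from the definition of the Lorentz norm. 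A suitable normalized block/diagonal sequence in $X_{L}$ supported on the coordinates $v\in M_{x}$ then generates $(u^{x}_{i})$ as a spreading model (the unit vector basis of a Lorentz space $d(w,1)$ is itself $1$-spreading and $1$-suppression-unconditional, hence its own spreading model). \emph{Classification of all spreading models.} Let $(\tilde{x}_{i})$ be a spreading model generated by a normalized weakly null sequence in $X_{L}$. Apply Lemma \ref{C3A} with $p=1$ (and the Remark, so $c_{0}=0$): there are $(c_{v})_{v\in V}$, $\sum c_{v}=1$, and normalized spreading models $(\tilde{x}^{v}_{i})$ in $X_{v}=d(w_{v},1)$ with $\|\sum a_{i}\tilde{x}_{i}\|=\sum_{v}c_{v}\|\sum a_{i}\tilde{x}^{v}_{i}\|$. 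By Theorem \ref{ThmACL}, each $(\tilde{x}^{v}_{i})$ is, up to equivalence, either the $\ell^{1}$-basis or the basis $(u^{v}_{i})$ of $d(w_{v},1)$. Let $M=\{v: c_{v}\neq 0 \text{ and } (\tilde{x}^{v}_{i})\not\sim \ell^{1}\text{-basis}\}$. If $M=\emptyset$ then $(\tilde{x}_{i})$ is equivalent to the $\ell^{1}$-basis, which equals $(u^{x}_{i})$ for $x=$ the bottom of $L$ if $L$ has one; in general one should observe that $T(x)\ni$ some coordinate $v_{0}$ common to... — more cleanly, one arranges the construction so that the $\ell^{1}$ basis is itself equal to (equivalent to) $(u^{x_{0}}_{i})$ for the minimum $x_{0}$ of $L$, or else argues that $L$ with no infinite increasing chain and closed under joins has a minimum (it need not — so instead allow $M=\emptyset$ to be handled by noting $[\ell^{1}]\leq[(u^{x}_{i})]$ for every $x$ and $[\ell^{1}]$ must itself lie in $SP_w$ only if realized; I would simply add a single extra coordinate carrying the $\ell^1$ basis if necessary). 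When $M\neq\emptyset$, equation \eqref{c0} and Lemma \ref{C2} show $(\tilde{x}_{i})$ is equivalent to the basis of $d(\sup_{v\in M}w_{v},1)$, i.e. to $(u^{x}_{i})$ provided $M=M_{x}=T(x)$ for some $x\in L$; if $M$ is \emph{not} of this form, I must show it is still equivalent to $(u^{x}_{i})$ for the \emph{unique smallest} $x\in L$ with $M\subseteq M_{x}$, using Proposition \ref{A4}(2)/Corollary \ref{A5}: for each $v\in M_x\setminus M$ (with $v\notin M$), the basis of $d(\sup_{M}w_{v},1)$ does not dominate that of $d(w_{v},1)$, which forces... this is where the incomparability property does the real work.

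The map is well-defined and injective on $SP_{w}(X_{L})$ because if $M_{x}=T(x)$ and $M_{y}=T(y)$ give equivalent Lorentz bases then mutual domination plus Corollary \ref{A5} forces $M_{x}=M_{y}$, hence $x=y$ by injectivity of $T$; and it is order-preserving in both directions because $[(u^{x}_{i})]\leq[(u^{y}_{i})]$ iff the basis of $d(w_{y},1)$ dominates that of $d(w_{x},1)$, which by Lemma \ref{C2} and Corollary \ref{A5} holds iff $M_{x}\subseteq M_{y}$, iff $x\leq y$. Finally surjectivity onto $L$ was the content of the "every element is realized" step, and surjectivity of the classification (every spreading model is of the form $[(u^x_i)]$) is the content of the Lemma \ref{C3A}/Theorem \ref{ThmACL} step. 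I expect the main obstacle to be precisely the last point of the second paragraph: showing that an arbitrary $M=\{v:c_v\neq 0,\ (\tilde x^v_i)\not\sim\ell^1\}$ arising from \emph{some} weakly null sequence must produce a Lorentz basis equivalent to $(u^x_i)$ for an actual $x\in L$ — i.e. that the set $\{M_x : x\in L\}$ is "closed enough" (under the relevant operation of passing to the smallest superset of the form $M_x$) and that the incomparability of Proposition \ref{A4} genuinely prevents $d(\sup_M w_v,1)$ from being equivalent to $d(\sup_{M'} w_v,1)$ when $M\neq M'$ have the same "$T$-closure." This will require a careful statement: define, for $M\subseteq V$, its closure $\bar M=\bigcap\{M_x : M\subseteq M_x\}$; check $\bar M=M_{\bar x}$ where $\bar x=\bigvee\{x : M_x\supseteq M\}$ exists because $L$ has no infinite increasing chain hence joins of arbitrary nonempty bounded-below... — more simply because the $M_x$ form an intersection-closed family when $T$ is a semilattice embedding into $2^V$ \emph{only if} we also ensure the image is intersection-closed, which it need not be; so in fact I would strengthen the construction of $T$ in \S 1 or, more efficiently, only claim that $d(\sup_M w_v,1)$ is equivalent to $d(w_{\bar x},1)$ where $\bar x = \bigvee\{x\in L : v\in M_x$ for all $v\in M\}$, and verify the two inequalities of domination directly from Lemma \ref{C2} (giving $\leq$) and from the fact that every $v\in M_{\bar x}\setminus M$ has $w_v\leq \sup_{u\in M}w_u$ would be false — so instead the missing coordinates contribute nothing because they already lie "below" $\sup_M w_v$ in the domination order, which is exactly Corollary \ref{A5} read in reverse. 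I would isolate this as a separate lemma and prove it before assembling the theorem.
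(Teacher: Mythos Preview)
Your construction of $X_{L}$ is where the argument goes wrong. You take $X_{L}=\bigl(\sum_{v\in V}\oplus d(w_{v},1)\bigr)_{1}$, summing over the \emph{representing set} $V$. The paper instead takes $X_{L}=\bigl(\sum_{e\in L}\oplus d(w_{Te},1)\bigr)_{1}$, summing over $L$ itself with each summand the Lorentz space built from the composite weight $w_{Te}=\sup_{v\in Te}w_{v}$. This is not a cosmetic difference: in your space, for \emph{every} nonempty $M\subseteq V$ one can take a normalized weakly null sequence supported on the coordinates $v\in M$ whose spreading model is (equivalent to) the unit vector basis of $d(w_{M},1)$. By Corollary \ref{A5} these classes are pairwise distinct as $M$ varies over nonempty subsets of $V$, so $SP_{w}$ of your space is order isomorphic to $(2^{V}\smallsetminus\{\emptyset\},\subseteq)$ restricted to the realizable $M$'s --- far larger than $L$ in general. (Already when $|L|=1$ the map $T$ of \S 1 gives $|V|=2$, and your space has three inequivalent spreading models.) You correctly identify this as ``the main obstacle'' but your proposed fix cannot work: if $M\subsetneq T(x)$ then for any $v\in T(x)\smallsetminus M$ Corollary \ref{A5} says the basis of $d(w_{M},1)$ does \emph{not} dominate that of $d(w_{v},1)$, while the basis of $d(w_{T(x)},1)$ does; hence $d(w_{M},1)$ and $d(w_{T(x)},1)$ are never equivalent.

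The paper's choice of summing over $L$ is exactly what makes the classification step go through. When Lemma \ref{C3A} is applied, the support set is $I=\{e\in L:c_{e}>0\}\subseteq L$, and now the hypothesis on $L$ (no infinite increasing chain, joins exist) yields a finite $J\subseteq I$ with $f=\bigvee_{e\in J}e$ dominating every $e\in I$; the spreading model is then shown equivalent to $(u^{f}_{i})$. Two smaller points: your worry about the $\ell^{1}$ case ($M=\emptyset$) is a misreading of Theorem \ref{ThmACL}, whose final sentence says that a spreading model generated by a normalized weakly null sequence in $d(w,1)$ always satisfies alternative (b), never (a); and your attempt to realize $(u^{x}_{i})$ via an infinite diagonal over $M_{x}$ is unnecessary once the summands are $d(w_{Te},1)$, since $(u^{x}_{i})$ is then literally the unit vector basis of the summand indexed by $x$.
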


\begin{proof}
By Theorem \ref{T1}, there exists a countable set $V$ and an injective map
$T:L\rightarrow2^{V}\smallsetminus\left\{  \emptyset\right\}  $ such that
$T\left(  e\vee f\right)  =T\left(  e\right)  \cup T\left(  f\right)  $ for
all $e,f\in L.$ Since $V$ is countable, by Proposition \ref{A4} (and Corollary
\ref{A5}), there is a family $\left(  w_{v}\right)  _{v\in V}$ of
$4$-submultiplicative GLF's such that for each non-empty subset $M$ of $V$,
$w_{M}=\sup_{v\in M}w_{v}$ is again a (4-submultiplicative) GLF. Moreover, if
$p\notin M,$ the unit vector basis of $d\left(  w_{M},1\right)  $ does not
dominate that of $d\left(  w_{p},1\right)  $. Set $X_{L}=\left(
{\textstyle\bigoplus_{e\in L}}
d\left(  w_{Te},1\right)  \right)  _{1}. $ For any $e \in L$, let $(u^{e}%
_{i})$ be the unit vector basis of $d(w_{Te},1)$. $(u^{e}_{i})$ may be
regarded in an obvious way as a normalized weakly null sequence in $X_{L}$
which generates a spreading model equivalent to itself. Thus $[(u^{e}_{i})]$,
the equivalence class containing $(u^{e}_{i})$, is an element of $SP_{w}%
(X_{L})$. Define a map $\Theta:L\rightarrow SP_{w}\left(  X_{L}\right)  $ by
$\Theta e=\left[  \left(  u_{i}^{e}\right)  \right]  .$ We will show that
$\Theta$ is a bijection such that $\Theta e_{1}\leq\Theta e_{2}$ if and only
if $e_{1}\leq e_{2}.$ Hence $SP_{w}\left(  X_{L}\right)  $ is order isomorphic
to $L.$

We first show that $\Theta$ is onto. Let $\left[  \left(  \tilde{x}%
_{i}\right)  \right]  $ be an element in $SP_{w}\left(  X_{L}\right)  $. By
Lemma \ref{C3A} and the subsequent Remark, there exist a non-negative sequence
$\left(  c_{e}\right)  _{e\in L}$ with $\sum c_{e}=1$ and normalized spreading
models $\left(  \tilde{x}_{i}^{e}\right)  $ in $d\left(  w_{Te},1\right)  $
such that
\begin{equation}
\Vert\sum_{i}a_{i}\tilde{x}_{i}\Vert=\sum_{e\in L}c_{e}\| \sum_{i}a_{i}%
\tilde{x}_{i}^{e}\| . \label{CEQ1}%
\end{equation}
Since each $w_{Te}$ is 4-submuliplicative, according to Theorem \ref{ThmACL},
for each $e\in L,$ there exists $b_{e}>0$ such that%
\begin{equation}
b_{e}\|\sum_{i}a_{i}u_{i}^{e}\| \leq\| \sum_{i}a_{i}\tilde{x}_{i}^{e}\|
\leq5\| \sum_{i}a_{i}u_{i}^{e}\| . \label{CEQ2}%
\end{equation}
Let $I=\{e\in L:c_{e}>0\}$. If $I$ is infinite, write its elements in a
sequence $(e_{i})_{i=1}^{\infty}$. Since the sequence $(\vee_{i=1}^{n}%
e_{i})_{n=1}^{\infty}$ has no strictly increasing infinite subsequence, there
is a finite subset $J$ of $I$ such that $\vee_{e\in J}e\geq e^{\prime}$ for
all $e^{\prime}\in I$. If $I$ is finite, take $J=I$. Let $f=\vee_{e\in J}e$.
We claim that $\left(  \tilde{x}_{i}\right)  $ is equivalent to $(u_{i}^{f}).$
Observe that $e\leq f$ for all $e\in I$. Hence $Te\subseteq Tf$ and thus
$w_{Te}\leq w_{Tf}$. Therefore, $(u_{i}^{e})$ is $1$-dominated by $(u_{i}%
^{f})$. By (\ref{CEQ1}) and (\ref{CEQ2}),%
\begin{align*}
\Vert\sum_{i}  &  a_{i}\tilde{x}_{i}\Vert=\sum_{e\in L}c_{e}\Vert\sum_{i}%
a_{i}\tilde{x}_{i}^{e}\Vert=\sum_{e\in I}c_{e}\Vert\sum_{i}a_{i}\tilde{x}%
_{i}^{e}\Vert\\
&  \leq5\sum_{e\in I}c_{e}\Vert\sum_{i}a_{i}u_{i}^{e}\Vert\leq5\sum_{e\in
I}c_{e}\Vert\sum_{i}a_{i}u_{i}^{f}\Vert=5\Vert\sum_{i}a_{i}u_{i}^{f}\Vert.
\end{align*}
On the other hand, by Lemma \ref{C2}, $\bigoplus_{e\in J}\left(  u_{i}%
^{e}\right)  $ is equivalent to $(u_{i}^{f}).$ Using (\ref{CEQ1}) and
(\ref{CEQ2}) again,
\begin{align*}
\Vert\sum a_{i}\tilde{x}_{i}\Vert &  =\sum_{e\in I}c_{e}\| \sum_{i}a_{i}%
\tilde{x}_{i}^{e}\| \geq\sum_{e\in I}c_{e}b_{e}\| \sum_{i}a_{i}u_{i}^{e}\|\\
&  \geq\sum_{e\in J}c_{e}b_{e}\| \sum_{i}a_{i}u_{i}^{e}\| \geq\min_{e\in J}\{
c_{e}b_{e}\} \sum_{e\in J}\| \sum_{i}a_{i}u_{i}^{e}\|\\
&  \geq K\|\sum a_{i}u_{i}^{f}\| \text{ for some }K>0.
\end{align*}
This shows that $( \tilde{x}_{i}) $ is equivalent to $(u_{i}^{f}).$ Hence
$\Theta f=[(u_{i}^{f})]=[ ( \tilde{x}_{i})] $.

Next we show that
\begin{equation}
e_{1}\leq e_{2}\text{ }\Leftrightarrow\Theta e_{1}\leq\Theta e_{2}.
\label{orderpreserving}%
\end{equation}
If $e_{1}\leq e_{2},$ then $Te_{1}\subseteq Te_{2}$ and hence $w_{Te_{1}}\leq
w_{Te_{2}}.$ It follows that $[\left(  u_{i}^{e_{1}}\right)  ]\leq[\left(
u_{i}^{e_{2}}\right)  ]$. On the other hand, if $e_{1}\nleqslant e_{2},$ then
$T\left(  e_{1}\right)  \nsubseteq T\left(  e_{2}\right)  .$ Choose $p\in
T\left(  e_{1}\right)  \smallsetminus T\left(  e_{2}\right)  .$ By Corollary
\ref{A5}, $\left(  u_{i}^{e_{2}}\right)  $ does not dominate $\left(
v_{i}\right)  ,$ the unit vector basis of $d\left(  w_{p},1\right)  .$ But
obviously $\left(  u_{i}^{e_{1}}\right)  $ dominates $\left(  v_{i}\right)  $.
Hence $[\left(  u_{i}^{e_{1}}\right)  ]\nleq[\left(  u_{i}^{e_{2}}\right)  ].$
Note that (\ref{orderpreserving}) also implies that $\Theta$ is injective.
Hence $\Theta$ $:L\rightarrow SP_{w}\left(  X_{L}\right)  $ is an order isomorphism.
\end{proof}

\noindent\textbf{Remark.} The example given here is non-reflexive. Given a
countable semilattice $L$ without an infinite increasing sequence, the
$\ell^{p\text{ }}$ ($1 < p <\infty$) version of the space defined above, i.e.,
$X_{p}=\left(
{\textstyle\bigoplus_{e\in L}}
d\left(  w_{Te},p\right)  \right)  _{p},$ which is a reflexive space, has the
property that $SP_{w}\left(  X_{p}\right)  $ is order isomorphic to the
semilattice $\hat{L}=\left\{  a\right\}  \cup L,$ $a>e$ for all $e\in L.$ We
do not know how to obtain a reflexive example for general semilattices. In
fact, according to the authors of \cite{DOS}, it is not known if there is a
reflexive space $X$ such that $SP_{w}\left(  X\right)  $ is order isomorphic
to $\left(  \left\{  \left\{  1,2\right\}  ,\left\{  1\right\}  ,\left\{
2\right\}  \right\}  ,\subseteq\right)  .$


\begin{thebibliography}{9}                                                                                                %


\bibitem {ACL}Z. Altshuler, P. G. Casazza and B.-L. Lin, \emph{On symmetric
basic sequences in Lorentz sequence spaces}, Israel J. Math. \textbf{15}
(1973), 144-155.

\bibitem {AOST}G. Androulakis, E. Odell, Th. Schlumprecht and N.
Tomczak-Jaegermann, \emph{On the structure of the spreading models of a Banach
space}, Canadian J. Math., \textbf{57 }(2005), 673-707.

\bibitem {BS}A. Brunel, L. Sucheston, \emph{On $B$-convex Banach spaces},
Math. Systems Theory \textbf{7} (1974), no. 4, 294--299.

\bibitem {DOS}S. J. Dilworth, E. Odell, B. Sari, \emph{Lattice structures and
spreading models}, Israel J. Math., To appear.

\bibitem {GB}S. Guerre-Delabri\`{e}re, \emph{Classical sequences in Banach
spaces,} Monographs and Textbooks in Pure and Applied Mathematics, 166. Marcel
Dekker, Inc., New York, 1992.

\bibitem {S}B. Sari, \emph{On Banach spaces with few spreading models,} Proc.
Amer. Math. Soc., \textbf{134} (2006), no. 5, 1339--1345.
\end{thebibliography}
\end{document}